\newtheorem{assumption}{Assumption}
\newcommand{\ignore}[1]{}
\newtheorem{alphatheorem}{Theorem}
\title{A Brooks' Theorem for Triangle-Free Graphs}
\author{Mohammad Shoaib Jamall
\protect\footnotemark[1]\protect\footnotetext[1]{Research supported in part by a Reuben H. Fleet Foundation Fellowship, 
and an ARCS Foundation Fellowship.}}
\institute{Department of Mathematics, UC San Diego \\
\email{mjamall@math.ucsd.edu}}
\begin{document}

\maketitle

\begin{abstract}
Let $G$ be a triangle-free graph with maximum degree $\Delta(G)$.
We show that the chromatic number $\chi(G)$ is less than $67(1+o(1)) \Delta / \log \Delta$.
\end{abstract}

\section{Introduction}
A \emph{proper vertex coloring} of a graph is an assignment of colors to all vertices
such that adjacent vertices have distinct colors.
The \emph{chromatic number} $\chi(G)$ of a graph $G$ is the minimum number of colors
required for a proper vertex coloring. 
Finding the chromatic number of a graph is NP-Hard \cite{GJ79}. 
Approximating it to within a polynomial ratio is also hard \cite{K01}.
For general graphs, $\Delta(G) + 1$ is a trivial upper bound.
Brooks' Theorem \cite{B41} shows that $\chi(G)$ can be $\Delta(G) + 1$ only if 
$G$ has a component which is either a complete subgraph or an odd cycle.

A natural question is:
can this bound be improved for graphs without large complete subgraphs?
In $1968$, 
Vizing \cite{V68} had asked what the best possible upper bound for the 
chromatic number of a triangle-free graph was.
Borodin and Kostochka \cite{BK77}, Catalin \cite{C78}, and Lawrence \cite{L78}
independently made progress in this direction;
they showed that for a $K_4$-free graph,
$\chi(G) \leq 3(\Delta(G) + 2)/4$.
On the other hand, Kostochka and Masurova \cite{KM77}, 
and Bollob{\'a}s \cite{B78} separately showed that there are graphs of arbitrarily large 
\emph{girth}(length of a shortest cycle) with
$\chi(G)$ of order $\Delta(G)/\log \Delta(G)$.
%

In $1995$, Kim \cite{K95} proved that
\begin{equation*}
\chi(G) \leq (1+o(1))\frac{\Delta(G)}{\log \Delta(G)}
\end{equation*}
when $G$ has girth greater than $4$.
Later on, Johansson \cite{J96} showed that 
\begin{equation*}
\chi(G) \leq O(\frac{\Delta(G)}{\log \Delta(G)})
\end{equation*}
when $G$ is a triangle-free graph(girth greater than $3$).
Alon, Krivelevich and Sudakov \cite{AKS99}, and 
Vu \cite{V02} extended the method of Johansson to prove bounds on the chromatic number 
for graphs in which no subgraph on the set of all neighbors of a vertex has too many edges.

Both Kim and Johansson used the so-called \emph{semi-random method}
to show that the chromatic number of graphs with large girth is $O(\Delta(G) / \log \Delta(G))$.
This technique,
also known as the \emph{pseudo-random method}, 
or the \emph{R{\"o}dl nibble},
appeared first in Ajtai, Koml{\'o}s and Szemer{\'e}di \cite{AKS81}
and was applied to problems in hypergraph packings,
Ramsey theory, colorings, and list colorings \cite{FR85,K92,K96,K2_95,PS89}.
In general, given a set $S_1$, the goal is to show that there is an object in $S_1$ with a desired property $\mathcal{P}$.
This is done by locating a sequence of non-empty subsets 
$S_1 \supseteq \dots \supseteq S_{\tau}$ with $S_{\tau}$ having property $\mathcal{P}$.
A randomized algorithm is applied to $S_t$, which guarantees that
$S_{t+1}$ will be obtained with some non-zero(often small) probability.
For upper bounds on chromatic number, 
the semi-random method is used to prove the existence of a proper coloring with a limited number of colors.

In this paper we prove that the chromatic number of a triangle-free graph $G$ is less than $67(1+o(1)) \Delta / \log \Delta$.
As we will indicate in Section \ref{sec:pseudo}, our proof is derived from Kim's proof of an upper bound to the chromatic number of graphs with girth greater than $4$.
We believe our technique is simpler than Johanssons' which follows a different approach to that of Kims (see \cite{MR01} for a comparison of both).

We give our proof by analyzing an iterative algorithm for graph coloring.
To analyze this algorithm we identify a collection of random variables.
The expected changes to these random variables after a round of the algorithm are written in terms of the values of the random variables before the round.
We thus obtain a set of recurrence relations and prove that our random variables are concentrated around the solutions to the recurrence relations with some
positive probability.

We describe our algorithm in Section \ref{sec:pseudo}.
Section \ref{sec:sketch} contains motivation, which is followed by a formal description of the algorithm in Section \ref{sec:algorithm_details}.
We give an outline of the analysis in Section \ref{sec:outline}.
Section \ref{sec:prelim} contains some useful lemmas which we are used 
in Section \ref{sec:details} to give details of the analysis.
\section{An Iterative Algorithm for Coloring a Graph}
\label{sec:pseudo}
Our algorithm takes as input a triangle-free graph $G$ on $n$ vertices,
its maximum degree $\Delta$,
and the number of colors to use $\Delta / k$ where $k$ is a positive number.
It goes through rounds and assigns colors to more vertices each round.
Initially all vertices are \emph{uncolored}(no color assigned),
at the end we have a proper vertex coloring of $G$ with some probability.

\begin{definition} Let $t$ be a natural number. 
We define the following: \\
\begin{tabular}{ p{1.3cm} p{14cm} } 
$G_{t}$ & 
The graph induced on $G$ by the vertices that are uncolored at the beginning of round $t$. \\
$N_{t}(u)$ & 
The set of vertices adjacent to vertex $u$ in $G_t$. 
That is, the set of uncolored neighbors of $u$ at the beginning of round $t$. \\
$S_t(u)$ & The list of colors that may be assigned to vertex 
$u$ in round $t$, also called the \emph{palette} of $u$.
For all $u$ in $V(G)$,
\begin{equation*}
S_0(u) = \{1, \dots, \Delta/k\}.
\end{equation*} 
\\
$D_{t}(u,c)$ & The set of vertices adjacent to $u$ 
that may be assigned color $c$ in round $t$. That is,
\begin{equation*}
D_t(u,c) := \{v \in N_t(u) | c \in S_t(v)\}.
\end{equation*} 
\end{tabular}
\end{definition}
\begin{samepage}
It will be useful to define variables for the sizes of the sets $S_t(u)$ and $D_t(u,c)$.
\begin{definition} {\ } 
\begin{align*}
s_t(u) &= |S_t(u)| \\
d_t(u,c) &= |D_t(u,c)|
\end{align*}
\end{definition}
\end{samepage}
Observe that for every round $t$, vertex $u$ in $V(G)$, and color $c$ in $\{1, \dots, \Delta/k\}$,
\begin{equation*}
d_t(u,c) \leq \Delta, \;\; s_0(u) = \Delta/k, \;\; s_t(u) \leq \Delta/k.
\end{equation*}

\subsection{A Sketch of the Algorithm and the Ideas behind its Analysis}
\label{sec:sketch}
We say that a sequence $x(n)$ is $O(f(n))$ if there is a positive number $M$ 
such that $|x(n)| \leq M |f(n)|$.
All sequences in the big-oh are indexed by $\Delta$, the maximum degree of graph $G$.
Remember that each occurrence of the big-oh comes with a distinct constant $M$.
We start by considering an algorithm that colors $\Delta$-regular graphs with girth greater than $4$. 
The coloring produced is proper with positive probability. 

\begin{samepage}
Let $(d_t)$ and $(s_t)$ be sequences defined recursively as \\
\begin{center}
\begin{tabular}{l l p{1cm} l l}
$d_0$ 	& 	$:= \Delta$		&	&	$d_{t+1}$		& 	$:= d_t (1- c_1 \frac {s_t}{d_t}) c_2$ \\
$s_0$ 	& 	$:= \Delta/k$		&	&	$s_{t+1}$		& 	$:= s_t c_2$ \\
\end{tabular}
\end{center}
where $c_1$ and $c_2$ are constants between $0$ and $1$,
which are determined by the analysis of the algorithm.
\end{samepage}

{\ }
\begin{center}
\framebox{
\begin{minipage}{15cm}
{\bf Repeat at every round $t$, until $d_{t}/s_t < 1/2$}
\\
{\it Phase I - Coloring Attempt }
\\
For each vertex $u$ in $G_t$: \\
\hspace*{.5cm}
	{\it Awake} vertex $u$ with probabilitity $s_{t}/d_{t}$. \\
\hspace*{.5cm}
	If {\it awake}, assign to $u$ a color chosen from $S_t(u)$ uniformly at random. \\

{\it Phase II - Conflict Resolution}
\\
For each vertex $u$ in $G_t$: \\
\hspace*{.5cm}
	If {\it u is awake}, \\
\hspace*{1cm}
		uncolor $u$ if an adjacent vertex is assigned the same color. \\
\hspace*{.5cm}
	Remove from $S_t(u)$, all colors assigned to adjacent vertices. \\
\hspace*{.5cm}
	$S_{t+1}(u) = S_t(u).$ \\
{\bf end repeat} \\
\\
Permanently color each vertex $u$ in $V(G_t)$ with a color picked independently and uniformly at random from its palette $S_t(u)$.
\end{minipage}}
\end{center}
{\ } \\
Observe that $d_0 / s_0 = k$ and
\begin{equation*}
\frac{d_{t+1}}{s_{t+1}} = \frac {d_t}{s_t} - c_1.
\end{equation*}
In $O(k)$ rounds $d_t / s_t$ will be less than $1/2$, and this marks the end of the repeat-until block.

The algorithm above is derived from Kim \cite{K95}.
After some modifications, his analysis tells us that if graph $G$ has girth greater than $4$,
then there are constants $c_1$ and $c_2$ less than $1$ such that at each round $t$,
$\forall u \in V(G_t), \forall c \in S_t(u)$
\begin{equation}
\label{eq:inv_sk}
s_t(u) = s_t(1+o(1)), \;\;\; d_t(u,c) = d_t(1+o(1))
\end{equation}
with probability greater than $0$.
The equations above imply that after the repeat-until block,
if $\Delta$ is large enough, then with positive probability $s_t(u) > 2 d_t(u,c)$
for all uncolored vertices $u$ and colors $c$ in their palette.
Now, applying the result of Haxell \cite{H01}, 
we find that the random assignment of colors to all uncolored vertices in the final step of the algorithm
gives a properly colored graph with positive probability. 

\subsubsection{The problem with cycles of length $4$.}
The analysis for the above algorithm is probabilistic and proves the property in equation 
\eqref{eq:inv_sk} by induction,
showing concentration of the variables around their expectations.
It fails for graphs with $4$-cycles. An example illustrates why:
Consider a vertex $u$ whose $2$-neighborhood, 
the graph induced by vertices within distance $2$ of $u$,
is the complete bipartite graph 
$K_{\Delta,\Delta}$ with partitions $X$ and $Y$.
Suppose that $u$ and another vertex $v$ are in $X$.
If $v$ is colored with $c$ in round $0$ while $u$ remains uncolored,
then the set $D_{1}(u,c) = \emptyset$; this violates equation \eqref{eq:inv_sk} since $d_1 \geq 1$ 
if for example $k \geq 2$ and $\Delta \geq 2/c_2$.
So, when the graph has $4$-cycles, 
$d_{t+1}(u,c)$ is not necessarily concentrated around its expectation with positive probability, 
given the state of the algorithm at the beginning of round $t$.
We must modify the algorithm in two ways.
\begin{description}
\item[First Modification: A technique for coloring graphs with $4$-cycles.]
While $d_t(u,c)$ is not concentrated enough when the graph has $4$-cycles, 
our analysis will show that the average of 
$d_{t+1}(u,c)$ over all colors in the palette of a vertex $u$
is concentrated enough.
How does this benefit us?
Markov's famous inequality may be interpreted as:
a list of $s$ positive number which average $d$
has at most $s/q$ numbers larger than $qd$
for any positive number $q$.
We modify the algorithm so that at the end of each round $t$,
every vertex $u$ removes from its palette every color $c$ with
$d_{t+1}(u,c)$ larger than $2 d_{t+1}$.
Look at what happens in round $t=1$.
By a straightforward application of Markov's inequality, 
instead of equation \eqref{eq:inv_sk} we will have the less stringent property:
$\forall u \in V(G_t), \forall c \in S_t(u)$ \\
\begin{align}
\label{eq:inv2_sk}
s_t(u) \geq \frac {1}{2} s_t(1-o(1)), 
		\;\; d_t(u,c) \leq 2 d_t(1+o(1)).
\end{align}
with positive probability.
In fact, using a generalization of Markov's inequality,
the analysis will show that with a few more modifications our algorithm 
maintains, with positive probability, a slightly stronger property(still weaker than equation \eqref{eq:inv_sk}).

{\ }\;\;\;\; Equation \eqref{eq:inv_sk} implies that the $s_t(u)$ and $d_t(u)$
at all uncolored vertices $u$ are about the same.
It is a strong statement and helps in the proofs,
but is too much to maintain on graphs with $4$-cycles.
Equation \eqref{eq:inv2_sk} is weaker and is obtained by our algorithm with positive probability.
Moreover,
it is sufficient to ensure that after the repeat-until block, with positive probability, 
for each uncolored vertex $u$  and color $c$ in its palette, $s_t(u) \geq 2 d_t(u,c)$.
This is a key idea in our algorithm.
\item[Second Modification: Using independent random variables for easier analysis.]
Instead of waking up a vertex with some probability,
and then choosing a color from its palette uniformly at random;
for each uncolored vertex $u$ and color $c$ in its palette,
we will assign $c$ to $u$ independently with some probability.
In case multiple colors remain assigned to the vertex after the conflict resolution phase,
we will arbitrarily choose one of them to permanently color the vertex.
This modification, adapted from Johansson \cite{J96},
will make concentration of our random variables simpler.
\end{description}
Next we provide a formal description of the algorithm we have just motivated.

\subsection{A Formal Description of the Algorithm}
\label{sec:algorithm_details}
Let $(d_t)$ and $(s_t)$ be sequences defined recursively as
\begin{align*}
d_0 		&:= \Delta		&			
		&	d_{t+1}:= d_t (1- \frac{1}{16} e^{-1/2} \frac {s_t}{d_t}) e^{-1/2} \\
s_0	 	&:= \Delta/k	&
		&	s_{t+1} := s_t e^{-1/2}. \\
\end{align*}
\begin{equation}
\label{eq:rec_stage_1}
{\ }
\end{equation}
For round $t$, vertex $u$, and color $c$,
\begin{align}
\label{eq:pr_u_dr_c}
	\mathcal{F}_t(u,c) &:= \{c \mbox{ is not assigned to any vertex adjacent to }u\mbox{ in round }t\}
\end{align}
is an event in the probability space generated by the random choices of the algorithm in round $t$,
given the state of all data structures at the beginning of the round.

Let 
\begin{align*}
Desired\_\mathcal{F}_t &:= e^{-1/2}.
\end{align*}
\begin{center}
\framebox{
\begin{minipage}{15cm}

{\bf Repeat at every round $t$, until $d_{t} / s_t < 1 / 8$}\\
{\it Phase I - Coloring Attempt }
\\
For each vertex $u$ in $G_t$, and color $c$ in $S_t(u)$: \\
\hspace*{1cm}
	Assign $c$ to $u$ with probability $\frac 1{4} \frac 1 {d_t}.$ \\
\\
{\it Phase II - Conflict Resolution}
\\
For each vertex $u$ in $G_t$: \\
\hspace*{.5cm}
	{\it Phase II.1} \\
\hspace*{.5cm}
	Remove from $S_t(u)$, all colors assigned to adjacent vertices. \\
\hspace*{.5cm}
	{\it Phase II.2} \\
\hspace*{.5cm}
	For each color $c$ in $S_{t}(u)$, remove $c$ from $S_{t}(u)$ with probability
\begin{equation*}
1 - min(1, \frac{Desired\_\mathcal{F}_t(u,c)}{Pr(\mathcal{F}_t(u,c)}).
\end{equation*}
\hspace*{.5cm}
	If $S_t(u)$ has at least one color which is assigned to $u$, \\
\hspace*{1cm}
		then arbirarily pick an assigned color from $S_t(u)$ to permanently color $u$. \\
\\
{\it Phase III - Cleanup(discard all colors $c$ with $d_{t+1}(u,c) \gtrsim 2 d_{t+1}$ from palette)}
\\
For each vertex $u$ in $G_t$: \\
\hspace*{.5cm}
	$S_{t+1}(u) = S_t(u)$.\\
\hspace*{.5cm}
	Let $\alpha = 1 - |S_{t+1}(u)| / s_{t+1}.$ \\
\hspace*{.5cm}
	If $\alpha < 0$, then $\alpha = 0$, otherwise if $\alpha > 1/2$, then $\alpha = 1/2$. \\
\hspace*{.5cm}
	Let $\gamma$ be the smallest number in $[1, \infty)$ so that
\begin{equation*}
	Average_{c \in S_{t+1}(u)} d_{t+1}(u,c) \leq \frac {1 - 2 \alpha}{1 - \alpha} \gamma d_{t+1}.
\end{equation*}
\hspace*{.5cm}
	Remove all colors $c$ with $d_{t+1}(u,c) \geq 2 \gamma d_{t+1}$ from $S_{t+1}(u)$. \\
{\bf end repeat} \\
\\
Permanently color each vertex $u$ in $V(G_t)$ with a color picked independently and uniformly at random from its palette $S_t(u)$.
\end{minipage}}
\end{center}

\section{The Main Theorem}
\label{sec:outline}


\begin{theorem}[Main Theorem]
Given $67 \Delta / \log \Delta$ colors and a triangle-free graph $G$ 
with maximum degree $\Delta$ large enough,
our algorithm finds a proper coloring of the graph with positive probability.
\end{theorem}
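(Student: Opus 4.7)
The plan is to analyze the algorithm round-by-round, maintaining by induction on $t$ the concentration invariant
\begin{equation*}
s_t(u) \geq \tfrac{1}{2} s_t (1-o(1)), \qquad d_t(u,c) \leq 2(1+o(1)) d_t
\end{equation*}
for every uncolored vertex $u$ and every color $c \in S_t(u)$, with per-round failure probability small enough to survive a union bound. Once the invariant is preserved through the termination of the repeat-until loop, the stopping condition $d_t/s_t < 1/8$ combined with the invariant yields $s_t(u) > 2 d_t(u,c)$, and then Haxell's theorem \cite{H01} provides a proper list coloring via the final uniform random step.

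First I would handle the deterministic bookkeeping for the recurrences \eqref{eq:rec_stage_1}. A direct computation gives $d_{t+1}/s_{t+1} = d_t/s_t - \tfrac{1}{16}e^{-1/2}$, so starting from $d_0/s_0 = k$ with $k = (\log \Delta)/67$ the ratio drops below $1/8$ after $O(\log \Delta)$ rounds. This bounds the number of probabilistic events that must be controlled, and it is this number of rounds that ultimately determines the constant $67$.

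The heart of the argument is a single-round concentration analysis. Fix round $t$, condition on the state at the start of the round, and consider the probability of the event $\mathcal{F}_t(u,c)$ defined in \eqref{eq:pr_u_dr_c}. By independence of the Phase I attempts, $\Pr[\mathcal{F}_t(u,c)] = (1 - \tfrac{1}{4 d_t})^{d_t(u,c)}$, which under the inductive invariant lies in $[e^{-1/2}, 1]$, so the Phase II.2 equalization is well-defined and makes the survival probability exactly $e^{-1/2}$ for every palette color. It follows that $\mathbb{E}[s_{t+1}(u)] = e^{-1/2} s_t(u)$, and since $s_{t+1}(u)$ is a sum of indicators each depending on independent coin flips at $u$ and its neighborhood, an Azuma or Talagrand-type inequality yields concentration with failure probability $\Delta^{-\omega(1)}$. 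For the quantities $d_{t+1}(u,c)$ the analysis is more delicate: $4$-cycles couple the survival of $c$ at distinct neighbors of $u$, so pointwise concentration need not hold; however, one can show that the average of $d_{t+1}(u,c)$ over $c \in S_{t+1}(u)$ is concentrated around $d_{t+1}$. This is where the triangle-free hypothesis is essential: it rules out the local structures that would inflate the variance of the average. The Phase III cleanup then converts average concentration into the pointwise upper bound $d_{t+1}(u,c) \leq 2 \gamma d_{t+1}$ via a Markov-style argument, at the price of removing a controlled fraction of colors that is absorbed into the $\alpha$ correction and into the factor of $\tfrac{1}{2}$ in the lower bound on $s_t(u)$.

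Once per-round failure is $\Delta^{-\omega(1)}$, a union bound over $O(\log \Delta)$ rounds, at most $n$ vertices, and at most $\Delta/k$ palette colors per vertex yields overall failure $o(1)$; so with positive probability the invariant holds throughout. At termination, $s_t(u) > 2 d_t(u,c)$ holds uniformly, and Haxell's theorem guarantees that a proper list coloring exists, which the final random assignment step finds with positive probability. The main obstacle, which drives both the algorithm design and the analysis, is precisely the concentration of the $d_{t+1}(u,c)$: the presence of $4$-cycles blocks any direct bounded-difference argument for individual palette colors, and only by combining independent color attempts (Second Modification), concentration of the average, and the Markov cleanup (First Modification) can one recover a usable invariant, namely the weaker \eqref{eq:inv2_sk} in place of the Kim-style \eqref{eq:inv_sk}.
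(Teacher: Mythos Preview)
Your outline tracks the paper's architecture closely---induction on rounds, concentration of palette sizes via Azuma, concentration of the \emph{average} $d_{t+1}(u,c)$ rather than the pointwise values, Markov-style cleanup, and Haxell at the end---but there is one genuine gap: the union bound over vertices.

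You assert that per-event failure is $\Delta^{-\omega(1)}$ and then union-bound over ``at most $n$ vertices''. Neither half of this works. The Azuma step yields failure probability $e^{-\psi}$, and the concentration error you must absorb into $e_{t+1}$ is of order $\sqrt{\psi/s_t}$. Since the error recurrence is $e_{t+1}=3e_t+\beta\sqrt{\psi/s_t}$ and you must run $t_1=16e^{1/2}k$ rounds, the accumulated error is $3^{t_1}O(\sqrt{\psi k\,e^{8e^{1/2}k}/\Delta})$; keeping this $\ll 1$ is exactly what pins down $k\le(\log\Delta)/67$ when $\psi=3\log\Delta$. Pushing $\psi$ to $\omega(\log\Delta)$ to get $\Delta^{-\omega(1)}$ failure would blow up the error and destroy the constant. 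And even $\Delta^{-\omega(1)}$ is not enough for a union bound over $n$ vertices, since $n$ is not bounded by any function of $\Delta$: the graph could have $n=2^{\Delta}$ vertices, or more.

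The paper handles this with the Lov\'asz Local Lemma rather than a union bound. Each bad event (failure of the invariant at some vertex $u$) depends only on the random choices made within distance $2$ of $u$, hence on at most $O(\Delta^2)$ other bad events. With $\psi=3\log\Delta$ one gets $p=O(\Delta^{-3})$ and $d=O(\Delta^2)$, so $4pd\le 1$ and the Local Lemma gives positive probability that all invariants hold simultaneously---regardless of $n$. This locality is what decouples the chromatic bound from the size of the graph, and it is not optional.
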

We need some lemmas to prove the Main Theorem and 
before that we need the following definition.
\begin{definition}
$d_{t}(v) = Average_{c \in S_{t}(v)} d_{t}(v,c)$\\
\end{definition}

\begin{lemma}[Main Lemma]
\label{lem:phase1}
Given $\psi > 1$ and a triangle-free graph $G$
with maximum degree 
$\Delta$, 
there is a positive constant $\beta$
such that for the sequence $(e_t)$ defined by
\begin{equation}
\label{eq:def_rece}
e_0 = 0, \mbox{\:\:\:} 
	e_{t+1} = 3 e_{t} + \beta(\sqrt{\frac{\psi}{s_{t}}}) \mbox{\;\; for } t > 0,
\end{equation}
if $s_t \gg \psi$ and $e_t \ll 1$ at round $t$, then
$$\forall u \in V(G_t), \exists \alpha \in [0, 1/2], \forall c \in S_{t}(u),$$
\begin{align*}
s_t(u) &\geq (1-\alpha) s_{t}(1 - e_{t}) \\
d_{t}(u) &\leq \frac{1-2 \alpha}{1- \alpha} d_{t}(1+ e_{t}) \\
d_{t}(u,c) &\leq 2 d_{t} (1+e_{t}) 
\end{align*}
with positive probability.
\end{lemma}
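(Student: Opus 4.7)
The plan is to prove Lemma~\ref{lem:phase1} by induction on $t$. The base case $t = 0$ is immediate: $e_0 = 0$, $s_0(u) = \Delta/k = s_0$ for every $u$, and $d_0(u,c) \leq \Delta = d_0$ for every $u,c$, so all three inequalities hold with $\alpha = 0$. For the inductive step, I fix a round $t$ at which the invariants hold and analyze one execution of Phases I--III, showing that, conditional on the state at the beginning of the round, the invariants still hold at round $t+1$ with error $e_{t+1}$ with sufficiently high probability to allow a union bound over all $u \in V(G_t)$ and $c \in S_t(u)$ (at most $n \cdot \Delta/k$ pairs).

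First I would compute expectations. Using that colors are assigned independently with probability $\tfrac{1}{4 d_t}$, and that Phase II.2 rescales so that $\Pr[c \in S_{t+1}(u)]$ is exactly $\mathit{Desired\_}\mathcal{F}_t = e^{-1/2}$ for every $u,c$, I get that $\mathbb{E}\,s_{t+1}(u) = e^{-1/2} s_t(u)$ before cleanup and $\mathbb{E}\, d_{t+1}(u,c)$ matches the recurrence for $d_{t+1}$ up to a multiplicative $(1 \pm o(1))$ factor. Here the triangle-free hypothesis is essential: when estimating $\mathbb{E}[\mathbf{1}_{v \in N_t(u)} \mathbf{1}_{c \in S_{t+1}(v)}]$, I must decouple the event ``$c$ survives in the palette of $v$'' from the event ``$c$ is not assigned to any neighbor of $u$'', and triangle-freeness gives that $v$'s neighbors (other than $u$) are not neighbors of $u$, so the two events involve essentially disjoint random choices. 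The inductive factors $(1-\alpha)$ and $(1-2\alpha)/(1-\alpha)$ propagate multiplicatively through these expectations.

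Second, I would establish concentration of $s_{t+1}(u)$ (before Phase III) and of $d_{t+1}(u,c)$ around their conditional means using a Talagrand/Azuma-type inequality in the style of Kim. The Lipschitz constant for each of these random variables under a single-color-assignment flip is $O(1)$, and the certificate size is $O(s_t)$ for $s_{t+1}(u)$ and $O(d_t)$ for $d_{t+1}(u,c)$, so deviations of order $\sqrt{\psi s_t}$ (respectively $\sqrt{\psi d_t}$) fail with probability at most $\exp(-\Omega(\psi))$. Choosing $\psi$ to grow like $\log \Delta$ makes the union bound over $n \cdot s_t$ pairs succeed.

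Third, Phase III introduces the parameter $\alpha$ at round $t+1$ through a Markov-type argument. After Phases I--II, the average of $d_{t+1}(u,c)$ over $c$ is, by the concentration step, at most $\tfrac{1-2\alpha}{1-\alpha}\gamma d_{t+1}(1+e_t')$ for the algorithm's chosen $\gamma \geq 1$, with $e_t'$ a one-step concentration error of size $O(\sqrt{\psi/s_t})$. Markov's inequality on this list of $|S_{t+1}(u)|$ values bounds the fraction of colors removed in Phase III, which determines the new $\alpha_{t+1} \in [0,1/2]$; the surviving colors satisfy $d_{t+1}(u,c) \leq 2\gamma d_{t+1}(1+e_{t+1})$ by construction, and $s_{t+1}(u) \geq (1-\alpha_{t+1}) s_{t+1}(1-e_{t+1})$ follows by combining the concentration bound on $|S_{t+1}(u)|$ with the Markov-based loss. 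Chasing the constants, the inductive factors $(1-\alpha)$ and $(1-2\alpha)/(1-\alpha)$ together with the cleanup produce at most a factor $3$ blow-up in the old error, matching the recurrence $e_{t+1} = 3 e_t + \beta \sqrt{\psi/s_t}$ for a suitable $\beta$.

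The main obstacle will be the concentration argument of the second step, particularly for $d_{t+1}(u,c)$. Because the event $c \in S_{t+1}(v)$ for $v \in N_t(u)$ depends on the Phase~I assignments at all neighbors of $v$ and on the Phase~II.2 reweighting involving $\Pr[\mathcal{F}_t(v,c)]$, a naive Lipschitz estimate blows up. Triangle-freeness localizes the dependence to distance-$2$ neighborhoods of $u$, and the Phase~II.2 rescaling must be handled by conditioning on a high-probability event that $\Pr[\mathcal{F}_t(v,c)]$ is close to $e^{-1/2}$ (which itself requires the inductive invariant at $v$). Making all of these dependencies fit into a Talagrand certificate of polynomial size is the most delicate part of the argument, and the choice of the constants $\tfrac{1}{4}$, $\tfrac{1}{16}$, $e^{-1/2}$ in the algorithm is dictated by closing this loop.
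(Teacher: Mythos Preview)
Your overall inductive scaffold matches the paper, but there are two genuine gaps.

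The critical one is in your second step: you propose to concentrate each individual $d_{t+1}(u,c)$ ``in the style of Kim'', asserting an $O(1)$ Lipschitz constant under a single color-assignment flip. This is precisely what fails once $4$-cycles are allowed, and is the whole reason the paper departs from Kim. If $w$ is a vertex at distance $2$ from $u$ sharing many neighbors with $u$---up to $\Delta$ of them, as in $K_{\Delta,\Delta}$---then flipping the assignment of color $c$ to $w$ removes $c$ from the palette of every common neighbor, changing $\tilde d_t(u,c)$ by as much as $\Delta$, not $O(1)$. The paper therefore \emph{never} attempts to concentrate $\tilde d_t(u,c)$ for a fixed $c$. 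It concentrates only the sum $\bar d_t(u)=\sum_{c\in\tilde S_t(u)}\tilde d_t(u,c)$ (Lemma~\ref{lem:tilde_a}), applying Azuma with one trial per \emph{color}: trial $T_i$ is the entire set of vertices assigned color $c_i$, and changing $T_i$ alters $\bar d_t(u)$ by at most $d_t(u,c_i)$, giving $\sum_i\alpha_i^2\le O(s_t(u)\,d_t\,d_t(u))$. The per-color bound $d_{t+1}(u,c)\le 2d_{t+1}(1+e_{t+1})$ then comes not from concentration at all but deterministically from Phase~III, which by construction discards every color with $d_{t+1}(u,c)\ge 2\gamma d_{t+1}$. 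Your write-up conflates these two mechanisms.

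Second, your union bound over $n\cdot\Delta/k$ pairs would force $\psi\gtrsim\log n$, whereas the lemma and its later application require $\psi$ to depend only on $\Delta$ (the paper takes $\psi=3\log\Delta$). The paper closes this with the Lov\'asz Local Lemma (Theorem~\ref{thm:lovasz}): each bad event at a vertex $u$ depends only on random choices within distance $2$ of $u$, hence on at most $O(\Delta^2)$ other bad events, and $4\,e^{-\psi}O(1)\cdot O(\Delta^2)\le 1$ suffices independently of $n$. Without the LLL the argument does not close when $n\gg\Delta^{O(1)}$.
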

We will prove the Main Lemma in Section \ref{sec:details} and assume it in this section.
Using it we can immediately conclude the following.
\begin{corollary}
\label{cor:main_lemma}
Given the setup of the Main Lemma(Lemma \ref{lem:phase1}),
if $s_t \gg \psi$ and $e_t \ll 1$ at round $t$, then $\forall u \in V(G_t)$,
\begin{align*}
s_t(u) &\geq \frac 1 2 s_t(1-e_t) \\
d_{t}(u) &\leq d_{t}(1+ e_{t})
\end{align*}
with positive probability.
\end{corollary}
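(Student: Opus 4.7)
The plan is to derive the Corollary as a direct, purely arithmetic weakening of the Main Lemma; the role of $\alpha$ in the Main Lemma is precisely to allow a tradeoff between the palette lower bound and the average-degree upper bound, and the Corollary simply throws that tradeoff away by bounding the two $\alpha$-dependent coefficients uniformly over $\alpha \in [0,1/2]$.

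First I would invoke the Main Lemma at round $t$ (its hypotheses $s_t \gg \psi$ and $e_t \ll 1$ are inherited from the Corollary's hypotheses). This yields, with positive probability, an event on which every uncolored vertex $u$ admits some $\alpha = \alpha(u) \in [0,1/2]$ for which the three stated inequalities hold. I would work on this event for the remainder of the argument, so the ``with positive probability'' conclusion of the Corollary comes for free from the Main Lemma; in particular I do not need to combine independent events or take any union bound.

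Then I would dispose of the two claimed inequalities separately. For the palette lower bound, since $\alpha \leq 1/2$ implies $1-\alpha \geq 1/2$, the Main Lemma gives $s_t(u) \geq (1-\alpha)\, s_t(1-e_t) \geq \tfrac{1}{2}\, s_t(1-e_t)$. For the average-degree upper bound, the function $\alpha \mapsto \frac{1-2\alpha}{1-\alpha}$ is decreasing on $[0,1/2]$, taking the value $1$ at $\alpha=0$ and $0$ at $\alpha=1/2$, so $\frac{1-2\alpha}{1-\alpha} \leq 1$ uniformly; hence $d_t(u) \leq \frac{1-2\alpha}{1-\alpha}\, d_t(1+e_t) \leq d_t(1+e_t)$. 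The third inequality of the Main Lemma, the per-color bound $d_t(u,c) \leq 2 d_t(1+e_t)$, is not asserted in the Corollary and plays no role here.

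There is no real obstacle: the Corollary is engineered to be this two-line consequence. The only thing to be careful about is that $\alpha$ is allowed to depend on $u$, which is fine because the two bounds above on $(1-\alpha)$ and on $\frac{1-2\alpha}{1-\alpha}$ hold uniformly on $[0,1/2]$, so the conclusion holds simultaneously for every $u \in V(G_t)$ on the same high-probability event.
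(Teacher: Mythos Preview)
Your proposal is correct and matches the paper's intent exactly: the paper states that the corollary follows ``immediately'' from the Main Lemma, and your argument---bounding $(1-\alpha)\ge \tfrac12$ and $\frac{1-2\alpha}{1-\alpha}\le 1$ uniformly for $\alpha\in[0,\tfrac12]$---is precisely the intended two-line deduction.
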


\begin{lemma}
\label{lem:t_stage_1}
The repeat-until block finishes in $16 e^{1/2} k$ rounds.
\end{lemma}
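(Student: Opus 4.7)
The plan is to track the evolution of the single quantity $r_t := d_t / s_t$, since the loop's termination condition depends only on this ratio.

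First I would compute the recurrence for $r_t$. From the definitions in \eqref{eq:rec_stage_1},
\begin{equation*}
r_{t+1} = \frac{d_{t+1}}{s_{t+1}} = \frac{d_t\bigl(1 - \tfrac{1}{16} e^{-1/2}\tfrac{s_t}{d_t}\bigr)e^{-1/2}}{s_t e^{-1/2}} = \frac{d_t}{s_t} - \frac{1}{16} e^{-1/2} = r_t - \frac{1}{16} e^{-1/2},
\end{equation*}
so the factors of $e^{-1/2}$ cancel and the ratio drops by a fixed constant $\tfrac{1}{16}e^{-1/2}$ every round. Since $r_0 = d_0 / s_0 = \Delta / (\Delta/k) = k$, induction immediately gives $r_t = k - \tfrac{t}{16}e^{-1/2}$.

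Next I would solve the termination inequality $r_t < 1/8$. This is equivalent to $t > 16 e^{1/2}(k - 1/8) = 16 e^{1/2} k - 2 e^{1/2}$, so the loop halts by round $t = \lceil 16 e^{1/2} k - 2 e^{1/2}\rceil$, which is at most $16 e^{1/2} k$ (for any $k \geq 1$, which we may assume since otherwise the stopping condition holds vacuously at $t=0$).

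This is essentially a one-line calculation, so there is no real obstacle. The only thing worth being careful about is that the recurrence for $d_{t+1}$ could conceivably become negative if $s_t/d_t$ grew large, but the stopping rule $r_t \geq 1/8$ ensures $s_t / d_t \leq 8$ throughout the loop, so $1 - \tfrac{1}{16}e^{-1/2}(s_t/d_t) \geq 1 - \tfrac{1}{2}e^{-1/2} > 0$ and the recursion stays well-defined, justifying the telescoping step above.
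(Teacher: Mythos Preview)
Your proof is correct and follows essentially the same approach as the paper: both compute the recurrence for the ratio $d_t/s_t$, observe it decreases by the fixed amount $\tfrac{1}{16}e^{-1/2}$ each round from the initial value $k$, and conclude the loop terminates within $16e^{1/2}k$ rounds. Your additional remark on the positivity of $d_{t+1}$ during the loop is a nice bit of care that the paper omits.
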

\begin{proof}
By the definition of sequences $(d_t)$ and $(s_t)$ in equation \eqref{eq:rec_stage_1}, we have
\begin{align*}
\frac{d_{t+1}}{s_{t+1}} &= \frac{d_{t}}{s_{t}} ( 1 - \frac {1} {16} \frac {s_{t}}{d_{t}} e^{-1/2}) \\
				&= \frac{d_{t}}{s_{t}} - \frac {1} {16} e^{-1/2}.
\end{align*}
Since $\frac {d_{0}}{s_{0}} = k$ we get $\frac {d_{t_1}}{s_{t_1}} \leq \frac 1 {4}$ after $16 e^{1/2} k$ rounds.
\qed
\end{proof}
Let
$$t_1 = 16 e^{1/2} k$$
be the last round of the repeat-until block. 
Then the following lemma is a straightforward application of equation \eqref{eq:rec_stage_1}.
\begin{lemma}
\label{lem:s_t1}
\begin{equation*}
s_{t_1} = \frac {\Delta} k exp(- 8 e^{1/2} k)
\mbox{   and,    if }
k \leq \frac 1 {9 e^{1/2}} \log \Delta
\mbox{ and }
\Delta
\mbox{ is large enough, then }  
s_{t_1} \gg 1.
\end{equation*}
\end{lemma}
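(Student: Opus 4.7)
The plan is to solve the recurrence $s_{t+1} = s_t e^{-1/2}$ with $s_0 = \Delta/k$ explicitly and then substitute $t = t_1$.

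First I would observe that the recurrence from equation \eqref{eq:rec_stage_1} is a pure geometric sequence in $s_t$ (the $d_t$ recursion doesn't enter), so $s_t = (\Delta/k)\, e^{-t/2}$ for every $t \geq 0$. Substituting $t_1 = 16 e^{1/2} k$ gives
$$s_{t_1} = \frac{\Delta}{k}\, e^{-8 e^{1/2} k} = \frac{\Delta}{k}\exp(-8 e^{1/2} k),$$
which is the first claim.

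For the second claim, I would use the hypothesis $k \leq \frac{1}{9 e^{1/2}} \log \Delta$ to bound the exponent: $8 e^{1/2} k \leq \tfrac{8}{9}\log \Delta$, so $\exp(-8 e^{1/2} k) \geq \Delta^{-8/9}$. Therefore
$$s_{t_1} \geq \frac{\Delta^{1/9}}{k} \geq \frac{9 e^{1/2}\,\Delta^{1/9}}{\log \Delta},$$
which tends to infinity as $\Delta \to \infty$, giving $s_{t_1} \gg 1$ for $\Delta$ large enough.

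There is no real obstacle here; the only thing worth being careful about is that the bound on $k$ must leave a positive exponent on $\Delta$ after subtracting the $8 e^{1/2} k$ loss, and the stated choice $k \leq \frac{1}{9 e^{1/2}} \log \Delta$ leaves a margin of $\Delta^{1/9}$, which comfortably dominates the $1/k = \Omega(1/\log \Delta)$ factor.
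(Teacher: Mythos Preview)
Your proposal is correct and matches the paper's approach exactly: the paper simply states that the lemma is a straightforward application of the recurrence in equation~\eqref{eq:rec_stage_1} and gives no further proof, and your argument supplies precisely that straightforward computation.
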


\subsection{Bounding the Error Estimate in all Concentration Inequalities}
Now we look at $s_{t}$, which is used to bound the error term $e_{t}$.
\begin{lemma}
\label{lem:e_stage_1_2}
$$e_{t} \leq 3^{t} O(\sqrt{\frac {k \; exp(8 e^{1/2} k) \psi}{\Delta}}).$$
\end{lemma}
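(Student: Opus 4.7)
The plan is to unroll the linear recurrence for $e_t$, which is driven by the inhomogeneous term $\beta\sqrt{\psi/s_t}$, and then bound the resulting sum. Since $e_0 = 0$ and $e_{t+1} = 3e_t + \beta\sqrt{\psi/s_t}$, iterating gives
\begin{equation*}
e_t \;=\; \beta \sum_{i=0}^{t-1} 3^{\,t-1-i} \sqrt{\psi/s_i}.
\end{equation*}
From here, two ingredients suffice. First, the sequence $(s_t)$ defined in \eqref{eq:rec_stage_1} is monotonically decreasing (each round multiplies by $e^{-1/2}$), so $s_i \geq s_{t_1}$ for every $i \leq t_1$. This lets me replace the $i$-dependent term $\sqrt{\psi/s_i}$ by the uniform upper bound $\sqrt{\psi/s_{t_1}}$ and pull it out of the sum. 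Second, what remains is the geometric sum $\sum_{i=0}^{t-1} 3^{t-1-i} = (3^t-1)/2 = O(3^t)$.

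Putting these together yields $e_t \le O\!\bigl(3^t \sqrt{\psi/s_{t_1}}\bigr)$, and then I substitute the closed form for $s_{t_1}$ supplied by Lemma~\ref{lem:s_t1}, namely $s_{t_1} = (\Delta/k)\exp(-8e^{1/2}k)$, to get exactly the stated bound
\begin{equation*}
e_t \;\le\; 3^t \cdot O\!\left(\sqrt{\frac{k\,\exp(8e^{1/2}k)\,\psi}{\Delta}}\right).
\end{equation*}

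There is no real obstacle here: the argument is essentially a geometric-series computation, and the worst case of the summand (the term with $i = t-1$, where $s_i$ is smallest) is what dominates. The slight bit of care needed is to note that we are only interested in $t \le t_1$, so the uniform bound $s_i \ge s_{t_1}$ is legitimate and gives a clean expression; a tighter per-term estimate ($\sqrt{\psi/s_i} = \sqrt{\psi k/\Delta}\,e^{i/4}$) would actually save a factor of $\exp(\Theta(k))$, but the looser bound is more than adequate for the later use of the lemma and keeps the statement simple.
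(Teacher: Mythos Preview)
Your proof is correct and follows essentially the same approach as the paper: bound the inhomogeneous term $\beta\sqrt{\psi/s_i}$ uniformly by $\beta\sqrt{\psi/s_{t_1}}$ using monotonicity of $(s_t)$, then sum the resulting geometric series and substitute the closed form for $s_{t_1}$ from Lemma~\ref{lem:s_t1}. The paper phrases it as replacing the recurrence by one with a constant forcing term before solving, while you unroll first and then bound; the content is identical.
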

\begin{proof}
By Lemma \ref{lem:s_t1}, we have
$$s_{t_1} = \frac{\Delta} k exp(- 8 e^{1/2} k).$$
Note that in equation \eqref{eq:def_rece}, the recurrence for $e_{t}$,
the largest term is $O(\sqrt{\psi / s_t})$.
Since the sequence $(s_t)$ is decreasing,
we use Lemma \ref{lem:s_t1} to conclude that
$$O(\sqrt{\frac {\psi}{s_{t_1} }}) = 
O(\sqrt{\frac {k \; exp(8 e^{1/2} k) \psi}{\Delta }})$$ 
is the maximum this term can be. Thus we can simplify the recurrence for $e_{t}$ to
$$e_{t+1} = 3 e_{t} + \sqrt{\frac {k \; exp(8 e^{1/2} k) \psi}{\Delta }}).$$
Since $e_{0} = 0$, a simple upper bound for $e_t$ is given by
$$e_{t} \leq 3^{t} O(\sqrt{\frac {k \; exp(8 e^{1/2} k) \psi}{\Delta}})$$
where $\alpha$ is some positive constant.
\qed
\end{proof}

\begin{lemma}
\label{lem:pre_main}
Given $\Delta / k$ colors where $k \leq \frac 1 {67} (\log \Delta)$ and a triangle-free graph $G$ 
with maximum degree $\Delta$,
our algorithm reaches the end of the repeat-until block at round 
$t_1 = O(k)$ with $e_{t_1} \ll 1$, and
$\forall u \in V(G_{t_1}), c \in S_{t_1}(u)$
\begin{align*}
s_{t_1}(u) &\geq \frac {1} 2 s_{t_1}(1 - e_{t_1}) \\
d_{t_1}(u,c) &\leq 2 d_{t_1}(1 + e_{t_1})
\end{align*}
with positive probability.
\end{lemma}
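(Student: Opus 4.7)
The plan is to invoke the Main Lemma (Lemma \ref{lem:phase1}) at every round $t \leq t_1$ of the repeat-until block, choosing a slowly-growing $\psi$ so that both hypotheses $s_t \gg \psi$ and $e_t \ll 1$ hold throughout; then the lemma's three conclusions applied at $t = t_1$ yield precisely the two bounds required here.

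First I would take $t_1 = 16 e^{1/2} k$ from Lemma \ref{lem:t_stage_1}, giving $t_1 = O(k)$. Since $(s_t)$ is monotonically decreasing and $(e_t)$ is monotonically increasing, both hypotheses need only be checked at the extremal round $t_1$. Setting $\psi = \log \Delta$, the first is straightforward: because $67 > 9 e^{1/2}$, Lemma \ref{lem:s_t1} applies and gives $s_{t_1} = (\Delta/k) \exp(-8 e^{1/2} k) = \Delta^{1 - 8 e^{1/2}/67 - o(1)}$, a polynomial in $\Delta$ that dwarfs $\psi$.

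The harder hypothesis to check is $e_{t_1} \ll 1$. Substituting $t_1 = 16 e^{1/2} k$ into Lemma \ref{lem:e_stage_1_2} expresses $e_{t_1}$ as a power of $\Delta$ whose exponent is linear in $k / \log \Delta$; the choice $k \leq (\log \Delta)/67$ is calibrated precisely so that this exponent is strictly negative. If the crude term-wise estimate in Lemma \ref{lem:e_stage_1_2} turns out too loose, I would sharpen it by summing the geometric series in its recurrence exactly --- since the ratio $e^{1/4}/3 \approx 0.43$ is bounded away from $1$, the sum is $\Theta(3^{t_1})$ rather than $t_1 \cdot 3^{t_1}$ and the $\exp(4 e^{1/2} k)$ factor can be trimmed, bringing the exponent comfortably below zero at $k = (\log \Delta)/67$.

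With both hypotheses verified, applying the Main Lemma at $t = t_1$ yields with positive probability an $\alpha_u \in [0, 1/2]$ at each uncolored vertex $u$ such that all three of its bounds hold simultaneously for every $c \in S_{t_1}(u)$. The bound $d_{t_1}(u,c) \leq 2 d_{t_1}(1 + e_{t_1})$ is the second desired conclusion directly; combining $s_{t_1}(u) \geq (1 - \alpha_u) s_{t_1}(1 - e_{t_1})$ with the elementary $1 - \alpha_u \geq 1/2$ gives the first claim. The main obstacle throughout is the arithmetic calibration of the constant $67$; the rest is essentially an orchestration of the lemmas already proved.
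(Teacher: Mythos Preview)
Your proposal is correct and follows essentially the same route as the paper: fix $\psi$ of order $\log\Delta$, verify $s_{t_1}\gg\psi$ via Lemma~\ref{lem:s_t1} and $e_{t_1}\ll 1$ via Lemma~\ref{lem:e_stage_1_2}, then invoke the Main Lemma (the paper routes through Corollary~\ref{cor:main_lemma}, which is exactly your observation that $1-\alpha_u\ge 1/2$). The paper takes $\psi=3\log\Delta$ rather than $\log\Delta$ and does not spell out your geometric-series sharpening of Lemma~\ref{lem:e_stage_1_2}, but otherwise the arguments coincide.
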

\begin{proof}
Let $\psi = 3 \log \Delta$,
and let $t_1$ be the number of rounds to reach the end of the repeat-until block. 
Using Lemma \ref{lem:e_stage_1_2}, we get
$$ e_{t_1} \leq 3^{t_1} O(\sqrt{\frac{k \; exp(8 e^{1/2} k )\psi}{\Delta}}).$$
Using Lemma \ref{lem:s_t1} it is straightforward to show that
$$e_{t_1} \ll 1 \;\;\; \mbox{and} \;\;\; s_{t_1} \gg \psi$$
if $k \leq \frac 1 {67} \log \Delta$.
Applying Corollary \ref{cor:main_lemma} completes the proof.
\qed
\end{proof}
We may now prove the Main Theorem.
\begin{proof}[of the Main Theorem]
Using Lemma \ref{lem:pre_main}, we get
$$\forall u \in V(G_{t_1}), c \in S_{t_1}(u) \;\;\; s_{t_1}(u) \geq 2 d_{t_1}(u,c)$$
with positive probability.
Now Haxell \cite{H01} shows that the final step of our algorithm finds(randomly coloring all uncolored vertices)
finds a proper coloring with positive probability.
\qed
\end{proof}

\section{Several Useful Inequalities}
\label{sec:prelim}
Now we look at some preliminaries which will be used in the proof details.
The next lemma describes what happens to the average value of a finite subset of
real numbers when large elements are removed. 
As shown in the statement of the lemma, 
it implies Markov's Inequality \cite{AS08}.
\begin{lemma}
\label{lem:mu1}
Consider a set of positive real numbers of size $n$ and average value $\mu$.
If we remove $\alpha n$ elements with value atleast $q \mu$ for some $q > 1$,
then the remaining points have average
$$ \mu' \leq \mu \frac{1-q \alpha}{1- \alpha}.$$
In particular, $\alpha \leq \frac 1 q$ since $\mu' \geq 0$.
\end{lemma}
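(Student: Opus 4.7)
The plan is to prove this by a direct accounting of the sum before and after the removal, using only that the removed elements are each at least $q\mu$ and that all elements (in particular the surviving ones) are nonnegative.

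First I would let $S$ denote the total sum of the $n$ numbers, so $S = n\mu$, and let $R$ be the sum of the $\alpha n$ removed elements. Since each removed element has value at least $q\mu$, I would immediately get the lower bound $R \geq \alpha n \cdot q\mu = q\alpha n \mu$. The sum of the surviving $(1-\alpha)n$ elements is $S - R$, so it is at most $n\mu - q\alpha n\mu = n\mu(1 - q\alpha)$.

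Dividing this bound by the number of surviving elements yields
\begin{equation*}
\mu' \;=\; \frac{S-R}{(1-\alpha)n} \;\leq\; \frac{n\mu(1-q\alpha)}{(1-\alpha)n} \;=\; \mu\,\frac{1-q\alpha}{1-\alpha},
\end{equation*}
which is the claimed inequality. For the second assertion, I would observe that since all original numbers are positive, the surviving ones are also positive, so their average $\mu'$ is nonnegative. Combining $\mu' \geq 0$ with the upper bound and the fact that $1-\alpha > 0$ forces $1 - q\alpha \geq 0$, i.e.\ $\alpha \leq 1/q$, which is exactly Markov's inequality applied to the empirical distribution on the $n$ values.

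There is no real obstacle here; the only subtlety worth flagging is that the lemma speaks of removing \emph{exactly} $\alpha n$ elements each of value $\geq q\mu$, not of removing \emph{all} such elements, so the argument never needs to identify the threshold set — the single inequality $R \geq q\mu\cdot\alpha n$ is all that is used. I would keep the write-up to just these few lines.
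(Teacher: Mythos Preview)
Your argument is correct and is exactly the paper's approach: the paper simply records the inequality $q\mu\alpha + \mu'(1-\alpha) \leq \mu$ (which is your bound $R \geq q\alpha n\mu$ combined with $R + (1-\alpha)n\mu' = n\mu$, divided through by $n$) and rearranges. Your write-up just makes the accounting explicit.
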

\begin{proof}
The conclusion is obtained by a trivial manipulation of the following inequality
which relates $\mu$ and $\mu'$.
$$ q \mu \alpha + \mu' (1 - \alpha ) \leq \mu $$
\qed
\end{proof}
The next lemma describes what  happens when we add large elements to a finite
subset of real numbers.
\begin{lemma}
\label{lem:mu2}
Given the setup of Lemma \ref{lem:mu1}, 
if we add $\alpha n$ points with value $q \mu$ to the sample,
then the resulting larger sample has average
$$ \mu' = \mu \frac {1+q \alpha}{1+\alpha}$$
\end{lemma}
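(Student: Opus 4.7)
The plan is to prove the identity by a direct computation of the total value in the enlarged sample, exactly parallel to (but simpler than) the manipulation used in Lemma \ref{lem:mu1}. First I would write $n\mu$ for the total value of the $n$ original points, using the definition of $\mu$ as their average. Then I would observe that the $\alpha n$ new points each contribute exactly $q\mu$, adding $q\mu \cdot \alpha n$ to the total. The new total value is therefore $n\mu(1+q\alpha)$, distributed over $n + \alpha n = n(1+\alpha)$ points, so dividing yields
\begin{equation*}
\mu' \;=\; \frac{n\mu(1+q\alpha)}{n(1+\alpha)} \;=\; \mu\,\frac{1+q\alpha}{1+\alpha},
\end{equation*}
as claimed.

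The only subtlety worth highlighting is the contrast with Lemma \ref{lem:mu1}: there the removed points only satisfied a lower bound on their values, which produced the inequality $\mu' \leq \mu(1-q\alpha)/(1-\alpha)$, whereas here the added points are stipulated to have value exactly $q\mu$, so the corresponding relation comes out as an equality. As a consistency check, when $q \geq 1$ the factor $(1+q\alpha)/(1+\alpha)$ is at least $1$, confirming that inserting values above the current mean can only push the average upward, with equality precisely when $q=1$. There is no real obstacle to overcome, since the claim reduces to a one-line arithmetic identity; the content of the lemma lies in packaging this identity into a form that will interface cleanly with the Markov-style bookkeeping used elsewhere in the analysis.
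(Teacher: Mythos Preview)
Your proof is correct and follows essentially the same approach as the paper: the paper simply writes down the relation $\mu'(1+\alpha) = \mu + q\mu\alpha$ and solves for $\mu'$, which is exactly your total-value computation with the common factor of $n$ already cancelled.
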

\begin{proof}
The conclusion is easily obtained from the following equation relating $\mu$ and $\mu'$.
$$ \mu'(1+\alpha) = \mu + q \mu \alpha $$
\qed
\end{proof}
We use the following lemma for computations with error factors.
\begin{lemma}
\label{lem:O}
Let $(A_n)$ be a sequence such that $0 < A_n < c < 1$(where $c$ is a constant),
and let $(e_n)$ be another sequence. Then
$$ 1 - A_n(1 + e_n) = (1 - A_n)(1+O(e_n)) $$
\end{lemma}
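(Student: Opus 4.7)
The plan is to proceed by direct algebraic manipulation, since the statement is really just the assertion that a particular multiplicative error can be absorbed into the big-O. First I would expand the left-hand side as
$$1 - A_n(1 + e_n) = (1 - A_n) - A_n e_n,$$
and then factor out $1 - A_n$, which is strictly positive since $A_n < c < 1$. This rewrites the left-hand side as $(1 - A_n)(1 + \epsilon_n)$ where
$$\epsilon_n = -\frac{A_n}{1 - A_n}\, e_n.$$

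The remaining step is to verify that $\epsilon_n = O(e_n)$ in the sense defined in Section \ref{sec:sketch}, i.e.\ that $|\epsilon_n| \leq M |e_n|$ for some constant $M$ independent of $n$. The hypothesis $0 < A_n < c < 1$ bounds the coefficient:
$$\frac{A_n}{1 - A_n} \;\leq\; \frac{c}{1 - c},$$
so taking $M = c/(1-c)$ closes the argument.

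There is really no substantial obstacle here; the lemma is essentially the observation that division by a quantity bounded away from zero does not inflate errors by more than a constant factor. The only point worth flagging is that the hypothesis $A_n < c < 1$ is used in its full strength: merely knowing $A_n < 1$ would allow $A_n/(1-A_n)$ to blow up, and the uniform constant $M$ in the big-O would be lost. With the stated hypothesis the constant $c/(1-c)$ is independent of the index, which is exactly what the big-O notation (as defined earlier in the paper) requires.
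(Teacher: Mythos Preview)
Your proof is correct and follows essentially the same approach as the paper: a direct algebraic rearrangement that factors out $(1-A_n)$ and then uses the hypothesis $A_n < c < 1$ to bound the residual coefficient by the constant $c/(1-c)$. The paper's intermediate decomposition is written slightly differently (it starts from $(1-A_n)(1+e_n) - e_n$ rather than $(1-A_n) - A_n e_n$), but the substance is identical.
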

\begin{proof}
\begin{eqnarray*}
1 - A_n(1+e_n) &=& (1-A_n)(1+e_n) - e_n \\
&=& (1-A_n)(1+e_n) - (1-A_n) \frac {e_n}{(1-A_n)} \\
&=& (1-A_n)(1+e_n) - (1-A_n) O(e_n) \\
&=& (1-A_n)(1+O(e_n))
\end{eqnarray*}
\qed
\end{proof}
We use the following version of Azuma's inequality \cite{MR01}
to prove concentration of random variables.
\begin{alphatheorem}[Azuma's inequality]
\label{thm:azuma}
Let $X$ be a random variable determined by $n$ trials
$T_1, \dots, T_n$, such that for each $i$,
and any two possible sequences of outcomes
$t_1, \dots, t_i$ and $t_1, \dots, t_{i-1}, t_i'$:
$$|E[X | T_1=t_1, \dots, T_i = t_i] - E[X | T_1=t_1, \dots, T_i = t_i']| \leq \alpha_i$$
then
$$Pr(|X - E[X]| > t) \leq 2 e^{-t^2/(\sum \alpha_i^2)}$$
\end{alphatheorem}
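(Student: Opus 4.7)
The plan is to reduce the claim to the standard concentration argument for bounded-difference martingales. First I would introduce the Doob martingale $X_i = E[X \mid T_1, \ldots, T_i]$, so that $X_0 = E[X]$ and $X_n = X$, and express $X - E[X] = \sum_{i=1}^n Y_i$ with $Y_i = X_i - X_{i-1}$. By the tower property each $Y_i$ is a martingale difference, i.e.\ $E[Y_i \mid T_1, \ldots, T_{i-1}] = 0$; and the hypothesis of the theorem, applied with the first $i-1$ coordinates frozen and only $T_i$ allowed to vary, bounds the oscillation of $X_i$ (and hence of $Y_i$) by $\alpha_i$ on every atom of the conditioning $\sigma$-algebra.

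Next I would invoke the exponential moment (Chernoff) method. For any $\lambda > 0$, Markov's inequality gives
\[
\Pr(X - E[X] > t) \;\leq\; e^{-\lambda t}\, E\!\left[e^{\lambda (X - E[X])}\right].
\]
The moment generating function on the right is controlled by Hoeffding's lemma applied conditionally: a zero-mean random variable $Z$ whose range is at most $\alpha$ satisfies $E[e^{\lambda Z}] \leq \exp(c\lambda^2 \alpha^2)$ for a suitable absolute constant $c$, the proof being convexity of $x \mapsto e^{\lambda x}$ on the supporting interval followed by a direct estimate on the resulting scalar function of $\lambda$. Conditioning successively on $T_1, \ldots, T_{i-1}$ and applying this estimate to $Y_i$ inside iterated expectations then yields $E\!\left[e^{\lambda(X - E[X])}\right] \leq \exp\!\left(c \lambda^2 \sum_i \alpha_i^2\right)$.

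Finally I would optimize in $\lambda$. The choice $\lambda \propto t / \sum_i \alpha_i^2$ converts the bound into a sub-Gaussian tail of the form $\exp(-t^2 / (c' \sum_i \alpha_i^2))$ for a suitable $c'$. Running the same argument with $X$ replaced by $-X$ handles the lower tail, and a union bound over the two tails produces the factor of $2$ in front. The main obstacle, to the extent there is one, is purely bookkeeping: matching the exact constant in the exponent to the stated form $e^{-t^2 / \sum_i \alpha_i^2}$ requires being careful about whether the hypothesis is used as a bound on $|Y_i|$ or on the range of $Y_i$, and choosing the correct normalization of Hoeffding's lemma accordingly. No new ideas enter beyond the Doob martingale, Chernoff, and Hoeffding recipe; the structural input from the paper is entirely contained in the bounded-differences hypothesis, which is exactly what is needed to make the martingale-difference bound effective.
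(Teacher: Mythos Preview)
Your outline is the standard Doob martingale plus Hoeffding--Chernoff argument and is correct as a proof of this version of Azuma's inequality. However, there is nothing to compare against: the paper does not prove Theorem~\ref{thm:azuma} at all. It is quoted as a black box from \cite{MR01} and used as a tool in the concentration steps of Lemmas~\ref{lem:sg} and~\ref{lem:tilde_a}. So your proposal supplies a proof where the paper deliberately omits one, and the approach you sketch is exactly the textbook one the cited reference would give.
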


We use the following version of the Lovasz Local Lemma \cite{MR01}
\begin{alphatheorem}[Lovasz Local Lemma]
\label{thm:lovasz}
Consider a set $\mathcal{E}$ of events such that for each $A \in \mathcal{E}$
\begin{itemize}
\item $Pr(A) \leq p < 1$, and
\item $A$ is mutually independent of a set of all but at most $d$ of the other events.
\end{itemize}
If $4pd \leq 1$, then with positive probability, none of the events in $\mathcal{E}$ occur.
\end{alphatheorem}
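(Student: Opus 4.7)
The plan is to prove the stronger claim that for every event $A \in \mathcal{E}$ and every subset $S \subseteq \mathcal{E} \setminus \{A\}$,
\[
\Pr\!\left[A \,\Big|\, \bigcap_{B \in S} \overline{B}\right] \leq 2p,
\]
by induction on $|S|$. The base case $S = \emptyset$ is immediate from the hypothesis $\Pr[A] \leq p$.

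For the inductive step I would partition $S = S_1 \cup S_2$, where $S_1$ consists of those events on which $A$ depends (so $|S_1| \leq d$) and $S_2$ consists of the events on which $A$ is mutually independent. Writing the conditional probability as a ratio,
\[
\Pr\!\left[A \,\Big|\, \bigcap_{B \in S} \overline{B}\right]
= \frac{\Pr\!\left[A \cap \bigcap_{B \in S_1} \overline{B} \,\Big|\, \bigcap_{B \in S_2} \overline{B}\right]}{\Pr\!\left[\bigcap_{B \in S_1} \overline{B} \,\Big|\, \bigcap_{B \in S_2} \overline{B}\right]},
\]
the numerator is bounded by $\Pr[A \mid \bigcap_{B \in S_2} \overline{B}] = \Pr[A] \leq p$ using mutual independence of $A$ from $S_2$. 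For the denominator I would apply a union bound together with the inductive hypothesis on each $B \in S_1$ (each conditioning set is a strict subset of $S$), yielding a lower bound of $1 - |S_1|\cdot 2p \geq 1 - 2pd \geq \tfrac{1}{2}$ thanks to $4pd \leq 1$. The ratio is therefore at most $2p$, closing the induction.

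With the inductive claim in hand, the theorem follows by telescoping: enumerate the events as $A_1, \dots, A_m$ and write
\[
\Pr\!\left[\bigcap_{i=1}^{m} \overline{A_i}\right] = \prod_{i=1}^{m} \Pr\!\left[\overline{A_i} \,\Big|\, \bigcap_{j<i} \overline{A_j}\right] \geq \prod_{i=1}^{m}(1 - 2p),
\]
which is strictly positive since $2p \leq 1/(2d) < 1$.

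The main obstacle is the bookkeeping around mutual independence: the equality $\Pr[A \mid \bigcap_{B \in S_2} \overline{B}] = \Pr[A]$ requires that mutual independence of $A$ from the events in $S_2$ survive conditioning on arbitrary Boolean combinations of those events, so I would need to state precisely what ``mutually independent of a set of all but at most $d$'' means (namely, independent of any event in the $\sigma$-algebra generated by that set) and invoke it cleanly. Everything else reduces to the induction and the final product expansion.
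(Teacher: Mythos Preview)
The paper does not prove this statement at all: Theorem~\ref{thm:lovasz} is quoted without proof as a standard tool, with a citation to Molloy and Reed. There is therefore nothing in the paper to compare your argument against.

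That said, your proposal is the standard inductive proof of the symmetric Local Lemma and is correct. One small remark: when you bound the denominator, the cleanest way to invoke the inductive hypothesis is not a bare union bound but the chain-rule expansion
\[
\Pr\Bigl[\bigcap_{i=1}^{k}\overline{B_i}\,\Big|\,\bigcap_{B\in S_2}\overline{B}\Bigr]
=\prod_{i=1}^{k}\Bigl(1-\Pr\bigl[B_i\,\big|\,\overline{B_1}\cap\cdots\cap\overline{B_{i-1}}\cap\bigcap_{B\in S_2}\overline{B}\bigr]\Bigr),
\]
so that each conditioning set is a strict subset of $S$ and the inductive hypothesis applies termwise, giving a lower bound of $(1-2p)^{k}\ge 1-2pd\ge \tfrac12$. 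Your union-bound phrasing also works (apply the hypothesis with conditioning set $S_2$, which has size strictly less than $|S|$), but it is worth being explicit about which conditioning set you are feeding back into the induction. Your closing observation about the precise meaning of ``mutually independent of a set'' is exactly the point that needs care, and you have identified it correctly.
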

\section{Proof of the Main Lemma}
\label{sec:details}
\label{sec:first_stage}
The following assumptions are repeatedly used in the lemmas of this section.
\begin{assumption}
\label{ass:indhyp}
Assume
$$s_t \gg \psi, e_t \ll 1$$
and with positive probability
$$\forall u \in V(G_t), \forall c \in S_{t}(u), \exists \alpha \in [0, 1/2],$$
\begin{align*}
s_t(u) &\geq (1-\alpha) s_{t}(1 - e_{t}) \\
d_{t}(u) &\leq \frac{1-2 \alpha}{1- \alpha} d_{t}(1+ e_{t}) \\
d_{t}(u,c) &\leq 2 d_{t} (1+e_{t}).
\end{align*}
\end{assumption}
All events in this section are in the probability space generated by our randomized algorithm in round $t+1$,
given the state of all data structures at the beginning of the round.
\begin{proof}[of the Main Lemma]
The proof is by induction on the round number $t$ using lemmas that follow.
The base case, when $t=0$, is trivially true. If we assume Assumption \ref{ass:indhyp}, the induction hypothesis,
for round $t$ then for each vertex $u$ in $V(G_{t+1})$, by Lemma \ref{lem:A} we have
\begin{align*}
Pr\{& d_{t+1}(u) \leq d_{t+1}(1 +O( e_{t} + \sqrt{\frac{\psi} {s_{t}}} + \frac{1}{d_{t}}) \}  \\
	& \;\;\;\;\;\;\;\;\;\;\;\; \geq 1 - e^{-\psi} O(1).
\end{align*}
For each $c$ in $S_{t+1}(u)$, by Lemma \ref{lem:S}, we have
\begin{align*}
Pr\{& \exists \alpha \in [0, \frac 1 2] \text{ such that } \\
	& s_{t+1}(u) \geq (1-\alpha) s_{t+1}(1 - 3 e_t + O(\sqrt{\frac{\psi}{s_t}} + \frac{1} {d_{t}})), \\
	& d_{t+1}(u) \leq \frac{1-2 \alpha}{1-\alpha} d_{t+1}(1+ 3 e_t + O(\sqrt{\frac{\psi}{s_t}} + \frac{1} {d_{t}})), \\
	& d_{t+1}(u,c) \geq (1-\alpha) 2d_{t+1}(1 - 3 e_t + O(\sqrt{\frac{\psi}{s_t}} + \frac{1} {d_{t}})) \} \\
	& \;\;\;\;\;\;\;\;\;\;\;\; \geq 1 - e^{-\psi} O(1).
\end{align*}
Each of the events in the probabilities above is dependent on at most $O(\Delta^2)$ other such events.
If $\psi \geq 3 \log \Delta$ and $\Delta$ is large enough, 
then we use Theorem \ref{thm:lovasz} to conclude that Assumption \ref{ass:indhyp} holds for round $t+1$ .
\qed
\end{proof}
The above proof of the Main Lemma required Lemmas 
\ref{lem:A} and \ref{lem:S}.
The rest of this section will prove these lemmas. 
Next we consider the state of the palettes just before the cleanup phase of round $t$.

\begin{definition}
Let ${\tilde S}_t(u)$ be the list of colors in the palette of vertex $u$
in round $t$ just before the cleanup phase,
and let ${\tilde s}_t(u)$ be the size of ${\tilde S}_t(u)$. 
That is, ${\tilde S}_t(u)$ is obtained from $S_t(u)$
by removing colors discarded in the conflict resolution phase.
\end{definition}

\begin{lemma}
\label{lem:sg}
Given Assumption \ref{ass:indhyp}, for each vertex $u$ in $V(G_{t+1})$ we have
\begin{align*}
Pr\{& s_{t}(u) e^{-1/2}(1 - \frac 1 2 e_t - O(\sqrt{\frac{\psi}{s_{t}}})) \leq \tilde s_{t}(u) \leq s_{t}(u) e^{-1/2}(1 + O(\sqrt{\frac{\psi}{s_{t}}}))\}  \\
	& \;\;\;\;\;\;\;\;\;\;\;\; \geq 1 - e^{-\psi} O(1).
\end{align*}
\end{lemma}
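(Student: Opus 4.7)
The plan is to write $\tilde{s}_t(u)$ as a sum of mutually independent Bernoulli variables indexed by the colors in $S_t(u)$, compute its expectation, and apply Azuma's inequality. Specifically, for each $c \in S_t(u)$ let $Y_c$ indicate that $c$ survives Phase~II.1 (equivalently, the event $\mathcal{F}_t(u,c)$), and let $Z_c$ be the independent Phase~II.2 coin, which equals $1$ with probability $\min(1,\, e^{-1/2}/\Pr(\mathcal{F}_t(u,c)))$. Then $\tilde{s}_t(u) = \sum_{c \in S_t(u)} Y_c Z_c$. The $Y_c$'s across different $c$ are mutually independent because Phase~I uses a separate coin for each (neighbor, color) pair, and the $Z_c$'s are trivially independent across colors, so the whole family $\{Y_c Z_c\}_{c \in S_t(u)}$ is independent.

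For the mean, $E[Y_c Z_c] = \min(\Pr(\mathcal{F}_t(u,c)),\, e^{-1/2})$, which gives the upper bound $E[\tilde{s}_t(u)] \leq s_t(u)\, e^{-1/2}$ immediately (this is why the upper bound in the statement carries no $e_t$ term). For the lower bound I would expand $\Pr(\mathcal{F}_t(u,c)) = (1 - 1/(4 d_t))^{d_t(u,c)}$; combining $d_t(u,c) \leq 2 d_t(1 + e_t)$ from Assumption~\ref{ass:indhyp} with $\log(1 - x) = -x + O(x^2)$ yields $\Pr(\mathcal{F}_t(u,c)) \geq e^{-1/2}(1 - e_t/2 - O(1/d_t))$, whence $E[\tilde{s}_t(u)] \geq s_t(u)\, e^{-1/2}(1 - e_t/2 - O(1/d_t))$. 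The $O(1/d_t)$ term is absorbed into the claimed $O(\sqrt{\psi/s_t})$ because the repeat-until guard enforces $d_t \geq s_t/8$ while $s_t \gg \psi$.

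For concentration, apply Azuma (Theorem~\ref{thm:azuma}) to the $s_t(u)$ grouped trials $\{(Y_c, Z_c)\}_{c \in S_t(u)}$; each trial changes $\tilde{s}_t(u)$ by at most $1$, so $\sum \alpha_i^2 \leq s_t(u) \leq s_t$, and deviation $C \sqrt{s_t \psi}$ fails with probability at most $2 e^{-C^2 \psi} = e^{-\psi} O(1)$ for a suitable constant $C$. Assumption~\ref{ass:indhyp} guarantees $s_t(u) \geq (1-e_t) s_t/2 = \Theta(s_t)$, so this deviation can be rewritten as $s_t(u)\, e^{-1/2} \cdot O(\sqrt{\psi/s_t})$, matching the statement. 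The main obstacle is mostly bookkeeping rather than probabilistic: tracking the asymmetry introduced by the $\min$ (which caps $E[\tilde{s}_t(u)]$ at $s_t(u)\,e^{-1/2}$ and thereby removes any $+e_t/2$ from the upper bound) and verifying that the $O(1/d_t)$ correction truly fits inside $O(\sqrt{\psi/s_t})$ throughout the repeat-until block.
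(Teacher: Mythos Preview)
Your proposal is correct and follows essentially the same route as the paper: compute $\Pr(\mathcal{F}_t(u,c))$ as a product over $D_t(u,c)$, bound it via $d_t(u,c)\le 2d_t(1+e_t)$ from Assumption~\ref{ass:indhyp}, take expectations by linearity, and apply Azuma with one trial per color in $S_t(u)$ and Lipschitz constant~$1$. Your explicit grouping of the Phase~II.2 coin $Z_c$ with the Phase~II.1 indicator $Y_c$, and your observation that $E[Y_cZ_c]=\min(\Pr(\mathcal{F}_t(u,c)),e^{-1/2})$ explains the missing $+e_t/2$ on the upper side, are in fact slightly cleaner than the paper's own write-up, which phrases the trials only as ``the set of vertices assigned color $c_i$'' and leaves the II.2 coin implicit.
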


\begin{proof}
Suppose $u$ is an uncolored vertex at the beginning of round $t$, 
and $c$ a color in its palette.
\begin{align*}
\lefteqn{Pr\{\text{$c$ is removed from $S_{t}(u)$ in phase II.1}\}}  \;\;\;\;\;\; \\
	&= 1 - Pr\{\text{no neighbor of $u$ is assigned $c$}\} \\
	&= 1 - \prod_{v \in D_{t}(u,c)}(1 - Pr\{ \text{$v$ is assigned $c$} \}) \\
	&= 1 - \prod_{v \in D_{u,c}}(1 - \frac 1 {4} \frac 1 {d_{t}} ) \\
	&\leq 1 - (1 - \frac 1 {4} \frac 1 {d_{t}})^{d_{t}(u,c)} \\
	&\leq 1 - (1- \frac 1 {4} \frac 1 {d_{t}})^{2 d_{t}(1+e_{t})} \\
	&\leq 1 - e^{\log (1 - \frac 1 {4} \frac 1 {d_{t}})2 d_{t}(1+e_{t})} 
		& \langle \log(1+x) = x + O(x^2) \rangle \\
	&\leq 1 - e^{(-\frac 1 {4} \frac 1 {d_{t}} + O(\frac 1 {d_{t}})^2) 2 d_{t}(1+e_{t})} 
		& \langle \mbox{Assumption \ref{ass:indhyp}} \rangle \\
	&\leq 1 - e^{-1/2}(1-\frac 1 2 e_t +O(\frac{1}{d_{t}}))
\end{align*}
In phase II.2 of round $t+1$ we remove colors from the palette using an appropriate bernoulli variable, to get
$$Pr\{c \notin \tilde S_{t}(u)\} = 1 - e^{-1/2}(1 - \frac 1 2 e_t + O(\frac {1}{d_{t}})).$$
Using linearity of expectation
$$ E[\tilde s_{t}(u)] = s_{t}(u) e^{-1/2}(1 - \frac 1 2 e_t + O(\frac{1}{d_{t}})).$$

For concentration of $\tilde s_t(u)$,
suppose $s_t(u) = m$.
Let $c_1, \dots, c_m$ be the colors in $S_t(u)$.
Then $\tilde S_t(u)$ may be considered a random variable determined by $m$
trials $T_1, \dots, T_m$ where $T_i$ is the set of vertices in $G_t$
that are assigned color $c_i$ in round $t$. Observe that $T_i$
affects $\tilde S _t(u)$ by at most 1 given $T_1, \dots, T_{i-1}$.
Now using Theorem \ref{thm:azuma} we get,
$$Pr\{|\tilde s_t(u) - E[\tilde s_t(u)]| \geq \sqrt{\psi s_t(u)}\} \leq e^{-\psi} O(1).$$
\qed
\end{proof}
We now focus on the sets $D_t(u,c)$. The following two lemmas will help.
\begin{lemma}
\label{lem:u_cl_c}
Let $u$ be an uncolored vertex, and $c$ be a color in its palette at the beginning of round $t$.
Then given Assumption \ref{ass:indhyp}, we have
$$Pr\{\text{$u$ is assigned  $c$ and $c \in \tilde S_t(u)$}\} = 
	\frac 1 {4} \frac 1 {d_{t}} e^{-1/2}(1- \frac 1 2 e_t + O(\frac {1}{d_{t}})).$$
\end{lemma}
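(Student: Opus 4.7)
The approach is to factor the joint probability using independence, then to evaluate each factor in turn. The Phase I trial that decides whether $u$ is assigned $c$ is independent of all other random choices that control membership of $c$ in $\tilde S_t(u)$ -- namely, the Phase I trials at the neighbors of $u$ for color $c$ and the Phase II.2 Bernoulli trial at $(u,c)$. Consequently,
$$Pr\{u \text{ is assigned } c \text{ and } c \in \tilde S_t(u)\} = Pr\{u \text{ is assigned } c\} \cdot Pr\{c \in \tilde S_t(u)\},$$
and the first factor is $\tfrac{1}{4 d_t}$ by the definition of Phase I.

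Next I would evaluate the second factor by walking through Phases II.1 and II.2 separately. A color $c \in S_t(u)$ survives Phase II.1 exactly on the event $\mathcal{F}_t(u,c)$, and then, independently of Phase II.1, survives Phase II.2 with probability $\min(1, Desired\_\mathcal{F}_t/Pr(\mathcal{F}_t(u,c)))$, where $Desired\_\mathcal{F}_t = e^{-1/2}$. Combining,
$$Pr\{c \in \tilde S_t(u)\} = Pr(\mathcal{F}_t(u,c)) \cdot \min\!\left(1, \frac{e^{-1/2}}{Pr(\mathcal{F}_t(u,c))}\right) = \min\!\bigl(Pr(\mathcal{F}_t(u,c)),\, e^{-1/2}\bigr).$$
This is the point at which the design of Phase II.2 pays off: that clause is engineered precisely to cap the per-color survival probability at essentially $e^{-1/2}$ regardless of the actual value of $Pr(\mathcal{F}_t(u,c))$.

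Finally I would reuse the computation already carried out inside the proof of Lemma \ref{lem:sg} to evaluate $Pr(\mathcal{F}_t(u,c)) = (1 - \tfrac{1}{4 d_t})^{d_t(u,c)}$ under Assumption \ref{ass:indhyp}. The bound $d_t(u,c) \leq 2 d_t (1 + e_t)$ together with $\log(1 + x) = x + O(x^2)$ yields $Pr(\mathcal{F}_t(u,c)) \geq e^{-1/2}(1 - \tfrac{1}{2} e_t + O(\tfrac{1}{d_t}))$, while the trivial upper bound $Pr(\mathcal{F}_t(u,c)) \leq 1$ keeps the minimum at most $e^{-1/2}$; these two bounds together force $\min(Pr(\mathcal{F}_t(u,c)), e^{-1/2}) = e^{-1/2}(1 - \tfrac{1}{2} e_t + O(\tfrac{1}{d_t}))$ in the paper's big-oh convention. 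Multiplying by $\tfrac{1}{4 d_t}$ delivers the claim.

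There is no genuinely hard estimate here; the only thing that requires care is bookkeeping the three independence statements -- Phase I at $(u,c)$ versus the neighbor trials, Phase I at $(u,c)$ versus Phase II.2, and Phase II.1 versus Phase II.2 -- each of which follows directly from the fact that the algorithm draws a fresh independent Bernoulli for every vertex-color pair at every step. The most subtle conceptual point is that the Phase II.2 clause is written so as to make $Pr\{c \in \tilde S_t(u)\}$ depend on $Pr(\mathcal{F}_t(u,c))$ but \emph{not} on whether $u$ itself was assigned $c$, and it is exactly this property that lets the joint probability split cleanly into the product computed above.
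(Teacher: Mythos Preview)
Your proof is correct and follows essentially the same route as the paper's: factor the joint probability into $Pr\{u\text{ is assigned }c\}\cdot Pr\{c\in\tilde S_t(u)\}$ by independence, read off the first factor as $\tfrac{1}{4d_t}$ from Phase~I, and pull the second factor $e^{-1/2}(1-\tfrac12 e_t+O(\tfrac1{d_t}))$ from the computation already done inside the proof of Lemma~\ref{lem:sg}. The paper compresses all of this into two lines, citing that earlier computation; your version simply unpacks the independence bookkeeping and the role of the Phase~II.2 cap more explicitly.
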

\begin{proof}
\begin{align*}
	\lefteqn{Pr\{ \text{$u$ is assigned  $c$ and $c \in \tilde S_t(u)$} \}} \;\;\;\;\;\; \\ 
		&= Pr\{ \text{$u$ is assigned $c$}\} Pr\{ c \in \tilde S_t(u)\} \\
		&= \frac 1 {4} \frac 1 {d_{t}} e^{-1/2}(1- \frac 1 2 e_t + O(\frac{1}{d_{t}}))
			& \langle \mbox{Equation \eqref{eq:u_dr_c_naturally}} \rangle
\end{align*}
\qed
\end{proof}
The following lemma is a consequence of the previous one.
\begin{lemma}
\label{lem:u_cl}
Let $u$ be an uncolored vertex at the beginning of round $t$. 
Then given Assumption \ref{ass:indhyp}, we have
$$Pr\{u\text{ is colored}\} 
	\geq \frac {1}{16} \frac {s_t} {d_{t}} e^{-1/2}(1 - 3 e_t + O(\frac {1}{d_{t}})).$$
\end{lemma}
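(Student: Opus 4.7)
For an uncolored vertex $u$ at the start of round $t$ and each $c \in S_t(u)$, let $A_c$ be the event that $c$ is assigned to $u$ in Phase~I and $c \in \tilde S_t(u)$. Then $u$ gets permanently colored in round $t$ iff at least one $A_c$ occurs. The whole argument rests on noticing that $\{A_c\}_{c \in S_t(u)}$ are mutually independent: $A_c$ is determined by the Phase~I coin on $(u,c)$, the Phase~I coins on $(v,c)$ for $v \in D_t(u,c)$, and the Phase~II.2 coin on $(u,c)$, so the random bits underlying $A_c$ and $A_{c'}$ are disjoint whenever $c \neq c'$. I would begin by justifying this, reframing the sequential Phase~II.2 step as an equivalent model where an independent ``keep'' bit is flipped a priori for every pair $(u,c)$ and is consulted only if $c$ survives Phase~II.1.

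\textbf{Key calculation.} With independence in hand and writing $q := \frac{1}{4 d_t} e^{-1/2}(1 - \tfrac{1}{2} e_t + O(\tfrac{1}{d_t}))$ for the lower bound on $\Pr[A_c]$ provided by Lemma~\ref{lem:u_cl_c}, we get
\[
\Pr[u \text{ is colored}] \;=\; 1 - \prod_{c \in S_t(u)}(1 - \Pr[A_c]) \;\geq\; 1 - e^{-s_t(u) q}.
\]
Now I apply the elementary inequality $1 - e^{-y} \geq y/2$, valid for $y$ up to a threshold $y^\star \approx 1.59$, together with Assumption~\ref{ass:indhyp} (which gives $s_t(u) \geq (1-\alpha) s_t(1-e_t) \geq \tfrac{1}{2} s_t(1-e_t)$ via $\alpha \leq 1/2$). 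Multiplying the factors $\tfrac{1}{4}$ from the assignment probability, $\tfrac{1}{2}$ from the palette lower bound, and $\tfrac{1}{2}$ from the $1-e^{-y}$ bound yields the promised coefficient $\tfrac{1}{16}$, while consolidating $(1-e_t)(1-\tfrac{1}{2}e_t) \geq 1 - 3 e_t$ produces the stated error term.

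\textbf{Main obstacle.} The sticking point is checking that $y = s_t(u) q$ actually lies in the domain where $1 - e^{-y} \geq y/2$ holds. This is where the loop condition $d_t/s_t \geq 1/8$, i.e.\ $s_t/d_t \leq 8$, is essential: it forces $y \leq 2 e^{-1/2} \approx 1.21 < y^\star$. Identifying this threshold and tracking the $e_t$ error factors through each step are the only nontrivial tasks; neither is conceptually difficult but both need care.
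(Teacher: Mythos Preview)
Your approach is essentially the paper's: express $\{u\text{ colored}\}$ as the union of the independent events $A_c$, plug in Lemma~\ref{lem:u_cl_c}, pass to the exponential, and linearize via $1-e^{-y}\ge y/2$. The paper performs exactly these four steps, so there is no substantive difference.

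There is one small ordering slip to fix. You apply $1-e^{-y}\ge y/2$ with $y=s_t(u)\,q$ and then verify $y\le y^\star$ using the loop condition $s_t/d_t\le 8$. But the loop condition bounds $s_t$, not $s_t(u)$; Assumption~\ref{ass:indhyp} gives only a \emph{lower} bound on $s_t(u)$, and nothing in the algorithm forces $s_t(u)\le s_t$. The remedy is to swap the order of the last two steps, exactly as the paper does: since $1-(1-q)^x$ is increasing in $x$, first replace $s_t(u)$ by its lower bound $\tfrac12 s_t(1-e_t)$, obtaining $1-\exp\bigl(-\tfrac18\tfrac{s_t}{d_t}e^{-1/2}(1-\tfrac32 e_t+O(1/d_t))\bigr)$, and only then apply $1-e^{-y}\ge y/2$ with $y=\tfrac18\tfrac{s_t}{d_t}e^{-1/2}(\cdots)\le e^{-1/2}<y^\star$. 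The arithmetic and the final constant $\tfrac{1}{16}$ are unchanged.
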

\begin{proof}
Consider the event
$$ \{u \mbox{ is colored}\} = 
	\bigcup_{c \in S_t(u)} \{u \mbox{ is assigned } c \mbox{ and } c \in \tilde S_t(u)\}.$$
Since the events in the union on the right hand side of the equation above are independent,
$$Pr\{u \mbox{ is colored}\} = 1 - 
	\prod_{c \in S_t(u)}
		(1 - Pr\{u \mbox{ is assigned } c \mbox{ and } c \in \tilde S_t(u)\}).$$
Now using Lemma \ref{lem:u_cl_c}, we get
\begin{align*}
\lefteqn{Pr \{u \mbox{ is colored}\}} \;\;\;\;\;\; \\ 
	&\geq 1 - (1 - \frac 1 {4} \frac 1 {d_t} e^{-1/2}(1 - \frac 1 2 e_t + O( \frac 1 {d_t})))^{s_t(u)}\\
	&\geq 1 - (1 - \frac 1 {4} \frac 1 {d_t} e^{-1/2}
		(1 - \frac 1 2 e_t + O(\frac 1 {d_t})))^{\frac {1}2 s_t(1-e_t)} 
		& \langle \mbox{Assumption \ref{ass:indhyp}} \rangle \\
	&\geq 1 - exp(- \frac {1} {8} \frac {s_t} {d_t} e^{-1/2}(1 - \frac 3 2 e_t + O(\frac 1 {d_t}))) \\
	&\geq 1 - (1 - \frac {1} {16} \frac {s_t} {d_t} e^{-1/2}(1 - \frac 3 2 e_t + O( \frac 1 {d_t})))  \\
	&=  \frac {1} {16} \frac {s_t} {d_t} e^{-1/2}(1 - \frac 3 2 e_t + O( \frac 1 {d_t})).
\end{align*}
\qed
\end{proof}
\begin{samepage}
We will need the following definitions.
\begin{definition} {\ } \\
\begin{itemize}
\item
Let $\tilde D_t(u,c)$ be the set of uncolored vertices that have color $c$
in their palettes  and are uncolored in round $t$, just before the cleanup phase. That is,
$$\tilde D_t(u,c) = D_t(u,c) \setminus (\{v | c \notin \tilde S _t(v)\} \cup \{v | v \mbox{ is colored in round } t\}).$$
\item
Let $\tilde d_t(u,c)$ be the size of $\tilde D_t(u,c)$.
\item
$ \bar d_{t}(u) := \sum_{c \in \tilde S_{t}(u)} \tilde d_{t}(u,c) = \sum_{c \in S_{t}(u)} 1_{\{c \in \tilde S_{t}(u)\}} \tilde d_{t}(u,c) $
\item
$ \tilde d_{t}(u) := \frac{\bar d_{t}(u)}{\tilde s_{t}(u)}$
\end{itemize}
\end{definition}
\end{samepage}

\begin{lemma}
\label{lem:tilde_a}
Given Assumption \ref{ass:indhyp}, for each vertex $u$ in $V(G_{t+1})$ we have
\begin{align*}
Pr\{& \tilde d_{t}(u) 
\leq d_{t}(u) (1 - \frac {1}{16} \frac {s_{t}}{d_{t}} e^{-1/2})e^{-1/2}(1 + 2 e_t + O(\sqrt{\frac{\psi} {s_{t}}} + \frac{1}{d_{t}} + \sqrt{\frac{\psi d_{t}}{s_{t} d_{t}(u)}}))\}  \\
	& \;\;\;\;\;\;\;\;\;\;\;\; \geq 1 - e^{-\psi} O(1).
\end{align*}
\end{lemma}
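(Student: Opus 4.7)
The plan is to bound $E[\bar d_t(u)]$ from above, to concentrate $\bar d_t(u)$ around its expectation with Azuma's inequality, and then to divide by $\tilde s_t(u)$, which is already controlled by Lemma \ref{lem:sg}.

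First I would expand
\[
\bar d_t(u) = \sum_{c \in S_t(u)} \sum_{v \in D_t(u,c)} 1_{\{c \in \tilde S_t(u)\}}\, 1_{\{c \in \tilde S_t(v)\}}\, 1_{\{v \text{ not colored}\}}
\]
and estimate each joint probability. Triangle-freeness is used here in a critical way: because $u$ and $v$ share no common neighbor, $\mathcal{F}_t(u,c)$ and $\mathcal{F}_t(v,c)$ depend on color assignments to disjoint sets of vertices and are independent. Combined with the calibrated Phase II.2 Bernoullis, this yields $Pr\{c \in \tilde S_t(u),\, c \in \tilde S_t(v)\} \leq e^{-1}(1 + O(e_t + 1/d_t))$.

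Next, I would handle the coloredness factor. Conditioning on $\{c \in \tilde S_t(u)\} \cap \{c \in \tilde S_t(v)\}$ forces the assignment $A_v^c = 0$, so $v$ cannot be colored by $c$, and the events that $v$ is colored by some $c' \neq c$ involve only Bernoullis independent of the conditioning. Repeating the argument of Lemma \ref{lem:u_cl} with one color excluded yields $Pr\{v \text{ not colored} \mid \cdot\} \leq 1 - \frac{1}{16}\frac{s_t}{d_t} e^{-1/2}(1 - 3 e_t + O(1/d_t))$. Summing over $c$ and $v$ and using $\sum_{c \in S_t(u)} d_t(u,c) = s_t(u) d_t(u)$ then yields
\[
E[\bar d_t(u)] \leq s_t(u) d_t(u) e^{-1}\bigl(1 - \tfrac{1}{16}\tfrac{s_t}{d_t} e^{-1/2}\bigr)\bigl(1 + 2 e_t + O(e_t + 1/d_t)\bigr).
\]

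For concentration I would apply Theorem \ref{thm:azuma}, with the trials being the Bernoullis $A_w^c$ and $B_w^c$ for vertices $w$ within distance $2$ of $u$. Most of these trials are benign, with Lipschitz constant $O(1)$, but the trials $A_v^c$ for $v \in N_t(u)$ are dangerous: flipping such a trial can simultaneously flip $1_{\{c \in \tilde S_t(u)\}}$ and the coloredness of $v$, which in turn toggles the last indicator across every $c' \in \tilde S_t(u) \cap \tilde S_t(v)$. A Lipschitz bound of $O(s_t(u) + d_t)$ for these dangerous trials, obtained via Assumption \ref{ass:indhyp}, together with careful bookkeeping of $\sum \alpha_i^2$, gives tail probability $e^{-\psi} O(1)$ for relative error $\sqrt{\psi d_t/(s_t d_t(u))}$. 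Dividing the resulting upper bound on $\bar d_t(u)$ by the lower bound $\tilde s_t(u) \geq s_t(u) e^{-1/2}(1 - \frac{1}{2} e_t - O(\sqrt{\psi/s_t}))$ from Lemma \ref{lem:sg} and collecting error terms produces the stated inequality.

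The main obstacle is the Azuma step: the Lipschitz constants of the dangerous trials are much larger than $1$, and one needs triangle-freeness together with Assumption \ref{ass:indhyp} to keep $\sum \alpha_i^2$ small enough to support the square-root error claimed in the statement.
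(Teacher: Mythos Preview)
Your overall plan---bound $E[\bar d_t(u)]$, concentrate via Azuma, then divide by $\tilde s_t(u)$ using Lemma~\ref{lem:sg}---is exactly the paper's strategy, and your estimate of the expectation agrees with it (the paper organizes the computation as $E[\bar d_t(u)] = \sum_c Pr\{c \in \tilde S_t(u)\}\, E[\tilde d_t(u,c) \mid c \in \tilde S_t(u)]$ and factors $Pr(\{c \notin \tilde S_t(v)\}\cap\{v\text{ not colored}\})$ by summing over the color that might color $v$, but the ingredients, including the use of triangle-freeness for independence, are the same).

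Where you depart is the choice of trials for Azuma. The paper does \emph{not} take the individual Bernoullis $A_w^c, B_w^c$ as trials. Instead it takes one trial $T_i$ per color $c_i \in S_t(u)$, where $T_i$ records the entire set of vertices assigned $c_i$ in round $t$. The asserted Lipschitz bound is then simply $\alpha_i \leq d_t(u,c_i)$, so that
\[
\sum_i \alpha_i^2 \;\leq\; \sum_{c \in S_t(u)} d_t(u,c)^2,
\]
and this last sum is maximized, subject to $\frac{1}{s_t(u)}\sum_c d_t(u,c) = d_t(u)$ and $d_t(u,c) \leq 2d_t$, when each $d_t(u,c)$ takes one of the extreme values $0$ or $2d_t$; this gives $\sum_i \alpha_i^2 = O(s_t(u)\, d_t\, d_t(u))$ and hence the relative error $\sqrt{\psi d_t/(s_t d_t(u))}$ in one line.

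Your individual-Bernoulli route runs into precisely the obstacle you flag: a single trial $A_v^c$ with $v \in N_t(u)$ can flip the coloredness of $v$ and thereby toggle as many as $|S_t(u) \cap S_t(v)|$ indicators at once. With a worst-case Lipschitz bound of order $s_t$ on each such trial, and of order $|N_t(u)|\cdot s_t$ many of them, the resulting $\sum \alpha_i^2$ is not obviously $O(s_t(u)\, d_t\, d_t(u))$; the ``careful bookkeeping'' you invoke is not a detail but the entire difficulty, and Assumption~\ref{ass:indhyp} alone does not obviously deliver it. The paper's color-grouping sidesteps this by bundling all color-$c_i$ randomness into a single trial whose effect on $\bar d_t(u)$ is controlled by $d_t(u,c_i)$ in one stroke.
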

\begin{proof}
Let $u$ be an uncolored vertex at the beginning of round $t$,
and let $c$ be a color in its palette.
For a vertex $v$ in $D_{t}(u,c)$, 
Lemma \ref{lem:u_cl_c} implies that $Pr\{v$ is colored with $d\} = O(1 / {d_t})$
for any color $d$ in $S_t(v)$.
Thus,
\begin{eqnarray*}
Pr(\{c \notin \tilde S_t(v) \} \cap \{v \mbox{ is colored}\}) 
	&=& \sum_{d \in S_t(v)} Pr(\{c \notin \tilde S_t(v) \} \cap \{v \mbox{ is colored with } d\}) \\
	&=& \sum_{d \in S_t(v)} Pr\{c \notin \tilde S_t(v) | v \mbox{ is colored with } d \} 
			Pr\{v \mbox{ is colored with } d\} \\
	&=& Pr\{c \notin \tilde S_t(v)\}(1 + O(\frac 1 {d_t})) \sum_{d \in S_t(v)}  
			Pr\{v \mbox{ is colored with } d\} \\
	&=& Pr\{c \notin \tilde S_t(v)\}Pr\{v \mbox{ is colored}\}(1 + O(\frac 1 {d_t})).
\end{eqnarray*}
A straightforward computation now shows that
\begin{equation}
\label{eq:v_dr_c_n_cln}
Pr(\{c \notin \tilde S_t(v) \} \cap \{v \mbox{ is not colored}\}) = \\
	Pr\{c \notin \tilde S_t(v)\}Pr\{v \mbox{ is not colored}\}(1 + O(\frac 1 {d_t})).
\end{equation}
Now, $v$ is removed from the set $D_t(u,c)$ if either it is colored or color $c$
is removed from its palette. This means that event
\begin{eqnarray*}
\{v \notin \tilde D_t(u,c) \} = \{\text{$v$ is colored}\} 
	\cup (\{c \notin \tilde S_t(v)\} \cap \{ \text{$v$ is not colored}\}).
\end{eqnarray*}
Since $G$ is triangle-free, $u$ and $v$ do not have any common neighbors.
This implies that
\begin{align*}
\lefteqn{Pr\{v \notin \tilde D_t(u,c) | c \in \tilde S_t(u)\}} \;\;\;\;\;\; \\
	&= Pr\{v \notin \tilde D_t(u,c)\}(1+O(\frac 1 {d_t})) \\
	&= (Pr\{\text{$v$ is colored}\}
		+ Pr(\{c \notin \tilde S_t(v) \} \cap \{ \text{$v$ is not colored}\}))(1+O(\frac 1 {d_t})) \\
	&= (Pr\{\text{$v$ is colored}\} 
		+ Pr\{c \notin \tilde S_t(v) \} Pr\{ \text{$v$ is not colored}\}) (1+O(\frac 1 {d_t})) 
		& \langle \mbox{equation \eqref{eq:v_dr_c_n_cln}}\rangle \\
	&= (Pr\{\text{$v$ is colored}\} 
		+ (1 - e^{-1/2})(1 - Pr\{\text{$v$ is colored}\}))(1 + O(\frac {1}{d_{t}})) 
		& \langle \mbox{equation \eqref{eq:u_dr_c_naturally}}\rangle \\		
	&= (1 - (1 - Pr\{\text{$v$ is colored}\}) e^{-1/2})(1 + O(\frac {1}{d_{t}})) \\
	&\geq (1 - (1 - \frac {1} {16} \frac {s_{t}} {d_{t}} e^{-1/2}) e^{-1/2})
											(1+ 2 e_t + O(\frac {1}{d_{t}}))
		& \langle \mbox{Lemma \ref{lem:u_cl}}\rangle.
\end{align*}
Using linearity of expectation
\begin{equation}
E[\tilde d_{t}(u,c) | c \in \tilde S_{t}(u)] 
= E[\tilde d_{t}(u,c)](1+O(\frac 1{d_t})) 
\leq d_{t}(u,c)(1 - \frac {1}{16} \frac {s_{t}} {d_{t}} e^{-1/2}) e^{-1/2}(1+ 2 e_t + O(\frac {1}{d_{t}})).
\end{equation}
Now using the above bound
\begin{eqnarray*}
E[\bar d_{t}(u)] &=& \sum_{c \in S_{t}(u)} Pr\{c \in \tilde S_{t}(u)\} E[\tilde d_{t}(u,c) | c \in \tilde S_{t}(u)] \\
	&\leq& e^{-1/2} \sum_{c \in S_{t}(u)} d_{t}(u,c)(1 - \frac {1}{16} \frac {s_{t}}{d_{t}} e^{-1/2}) e^{-1/2}(1+ 2 e_t + O(\frac {1}{d_{t}})) \\
	&\leq& e^{-1/2} s_{t}(u) d_{t}(u)(1 - \frac {1}{16} \frac {s_{t}}{d_{t}} e^{-1/2}) e^{-1/2}(1+ 2 e_t + O(\frac {1}{d_{t}}))
\end{eqnarray*}

For concentration of $\bar d_t(u)$,
suppose $s_t(u) = m$.
Let $c_1, \dots, c_m$ be the colors in $S_t(u)$.
Then $\bar d_t(u)$ may be considered a random variable determined by the random trials
$T_1, \dots, T_m$, where $T_i$ is the set of vertices in $G_t$ that are assigned color $c_i$
in round $t$.
Observe that $T_i$ affects $\bar d_t(u)$ by at most $d_t(u,c)$.

Thus $\sum \alpha_i^2$ in the statement of Theorem \ref{thm:azuma}
is less that $\sum_{c \in S_t(u)} d_t^2(u,c)$.
This upperbound is maximized when the $d_{t}(u,c)$ take the extreme values of
$2 d_{t}$ and $0$ subject to $d_{t}(u) = \frac 1 {s_{t}(u)} \sum_{c \in S_{t}(u)} d_{t}(u,c)$.
Thus
$$\sum \alpha_i^2 \leq O( (d_{t})^2 d_{t}(u) s_{t}(u) / d_{t}) \leq O(s_{t}(u) d_{t} d_{t}(u))$$
Using Theorem \ref{thm:azuma}, we get
\begin{align*}
Pr\{& \bar d_{t}(u) 
- e^{-1/2} s_t(u) d_{t}(u)(1 - \frac {1} {16} \frac {s_{t}} {d_{t}} e^{-1/2}) e^{-1/2}(1 + 2 e_t + O(\frac {1}{d_{t}})) \geq O(\sqrt{\psi s_t(u)d_{t} d_{t}(u)}) \} \\
	&\;\;\;\;\;\;\;\;\;\;\;\;  \leq e^{-\psi} O(1).
 \end{align*}
Lemma \ref{lem:sg} says that 
\begin{align*}
Pr\{& s_{t}(u) e^{-1/2}(1 - \frac 1 2 + O(\sqrt{\frac{\psi}{s_{t}}})) \leq \tilde s_{t}(u) \leq s_{t}(u) e^{-1/2}(1 + O(\sqrt{\frac{\psi}{s_{t}}}))\}  \\
	& \;\;\;\;\;\;\;\;\;\;\;\; \geq 1 - e^{-\psi} O(1).
 \end{align*}
Combining the above two inequalities we have
\begin{align*}
Pr\{& \frac{\bar d_{t}(u)}{\tilde s_{t}(u)} 
- d_{t}(u)(1 - \frac {1} {16} \frac {s_{t}} {d_{t}} e^{-1/2}) e^{-1/2}(1 + 2 e_t + O(\frac {1}{d_{t}} + \sqrt{\frac{\psi}{s_{t}}})) \geq O(\sqrt{\frac{\psi d_{t} d_{t}(u)}{s_{t}}}) \} \\
	& \;\;\;\;\;\;\;\;\;\;\;\; \leq e^{-\psi} O(1).
 \end{align*}
Therefore
$$Pr\{\frac{\bar d_{t}(u)}{\tilde s_{t}(u)} 
\geq d_{t}(u)(1 - \frac {1} {16}\frac {s_{t}}{d_{t}} e^{-1/2}) e^{-1/2}
(1 + 2 e_t + O(\sqrt{\frac{\psi} {s_{t}}} + \frac{1}{d_{t}} + \sqrt{\frac{\psi d_{t}}{s_{t} d_{t}(u)}}))\} 
\leq e^{-\psi} O(1).$$
\qed
\end{proof}

Note that $\frac{\bar d_{t}(u)}{\tilde s_{t}(u)}$ is the average $|\tilde D_t(u,c)|$ 
at a vertex $u$ at the end phase II.
Phase III only brings this average down by removing colors with large $d_{u,c}$.
Thus we get the next lemma almost immediately.

\begin{lemma}
\label{lem:A}
Given Assumption \ref{ass:indhyp}, for each $u$ in $V(G_{t+1})$ we have
\begin{align*}
Pr\{& d_{t+1}(u) \leq d_{t+1}(1 +2 e_t + O( \sqrt{\frac{\psi} {s_{t}}} + \frac{1}{d_{t}}) \}  \\
	& \;\;\;\;\;\;\;\;\;\;\;\; \geq 1 - e^{-\psi} O(1).
\end{align*}
\end{lemma}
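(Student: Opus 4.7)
The plan is to derive Lemma \ref{lem:A} as a short corollary of Lemma \ref{lem:tilde_a}, which has already carried out the concentration work on the pre-cleanup average $\tilde d_{t}(u)$. Two extra ingredients suffice: (i) Phase III can only drive $d_{t+1}(u)$ below $\tilde d_{t}(u)$, since it strips away colors with the largest values of $d_{t+1}(u,c)$; and (ii) Assumption \ref{ass:indhyp} together with the recurrence $d_{t+1} = d_{t}(1-\tfrac{1}{16}\tfrac{s_{t}}{d_{t}}e^{-1/2})e^{-1/2}$ converts the Lemma \ref{lem:tilde_a} upper bound into the target form.

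First I would establish $d_{t+1}(u) \leq \tilde d_{t}(u)$ deterministically. From the definitions, $S_{t+1}(u) \subseteq \tilde S_{t}(u)$, and $D_{t+1}(u,c) \subseteq \tilde D_{t}(u,c)$ implies $d_{t+1}(u,c) \leq \tilde d_{t}(u,c)$ for every color $c$. The colors removed in Phase III satisfy $d_{t+1}(u,c) \geq 2\gamma d_{t+1}$, which is at least twice the pre-removal average $\leq \frac{1-2\alpha}{1-\alpha}\gamma d_{t+1} \leq \gamma d_{t+1}$, so Lemma \ref{lem:mu1} gives
\[
d_{t+1}(u) = \mathrm{Avg}_{c \in S_{t+1}(u)} d_{t+1}(u,c) \leq \mathrm{Avg}_{c \in \tilde S_{t}(u)} d_{t+1}(u,c) \leq \mathrm{Avg}_{c \in \tilde S_{t}(u)} \tilde d_{t}(u,c) = \tilde d_{t}(u).
\]

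Then I would combine this with Lemma \ref{lem:tilde_a} and simplify. Assumption \ref{ass:indhyp} supplies $\alpha \in [0,1/2]$ with $d_{t}(u) \leq \frac{1-2\alpha}{1-\alpha} d_{t}(1+e_{t}) \leq 2 d_{t}$, and writing $A = (1-\tfrac{1}{16}\tfrac{s_{t}}{d_{t}}e^{-1/2})e^{-1/2} = d_{t+1}/d_{t}$, the main factor of Lemma \ref{lem:tilde_a} becomes $d_{t}(u)\,A \leq d_{t+1}(1+e_{t})$. The only subtle error term is $\sqrt{\psi d_{t}/(s_{t} d_{t}(u))}$, which looks dangerous for small $d_{t}(u)$; however, once carried along with the prefactor $d_{t}(u)\,A$ it becomes $A\sqrt{d_{t}(u)\,\psi d_{t}/s_{t}} \leq \sqrt{2}\,d_{t+1}\sqrt{\psi/s_{t}}$ using $d_{t}(u) \leq 2d_{t}$. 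Assembling, $\tilde d_{t}(u) \leq d_{t+1}(1 + O(e_{t} + \sqrt{\psi/s_{t}} + 1/d_{t}))$ with probability at least $1 - e^{-\psi}O(1)$, which via Step 1 yields the claim.

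I do not expect a real obstacle here; the conceptual work all sits inside Lemma \ref{lem:tilde_a} and Lemma \ref{lem:mu1}. The only delicate bookkeeping is the cancellation of the $d_{t}(u)$ factor against the $1/\sqrt{d_{t}(u)}$ appearing in the awkward error term, as described above.
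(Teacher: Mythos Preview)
Your proposal is correct and follows essentially the same route as the paper: both derive Lemma~\ref{lem:A} directly from Lemma~\ref{lem:tilde_a} by (i) noting that Phase~III can only decrease the average so that $d_{t+1}(u)\le \tilde d_t(u)$, and (ii) absorbing the awkward $\sqrt{\psi d_t/(s_t d_t(u))}$ term by multiplying through by $d_t(u)$ and using $d_t(u)\le d_t(1+e_t)$ from Assumption~\ref{ass:indhyp}. Your write-up is in fact slightly more explicit than the paper's about justifying $d_{t+1}(u)\le \tilde d_t(u)$ via $D_{t+1}(u,c)\subseteq \tilde D_t(u,c)$ and Lemma~\ref{lem:mu1}, which is a good thing.
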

\begin{proof}
Let $u$ be a vertex in $V(G_{t+1})$. By Lemma \ref{lem:tilde_a}
\begin{align*}
Pr\{& \tilde d_{t}(u) 
\leq d_{t}(u) (1 - \frac {1}{16} \frac {s_{t}}{d_{t}} e^{-1/2})e^{-1/2}(1 + 2 e_t + O(\sqrt{\frac{\psi} {s_{t}}} + \frac{1}{d_{t}} + \sqrt{\frac{\psi d_{t}}{s_{t} d_{t}(u)}}))\}  \\
	& \;\;\;\;\;\;\;\;\;\;\;\; \geq 1 - e^{-\psi} O(1).
\end{align*}
Now
\begin{align*}
\lefteqn{d_{t}(u)(1 - \frac {1} {16} \frac {s_{t}}{d_{t}} e^{-1/2}) e^{-1/2}
 (1 + 2 e_t + O(\sqrt{\frac{\psi} {s_{t}}} + \frac{1}{d_{t}} + \sqrt{\frac{\psi d_{t}}{s_{t} d_{t}(u)}})) }
 	\;\;\;\;\;\; \\
&= d_{t}(u)(1 - \frac {1} {16} \frac {s_{t}}{d_{t}} e^{-1/2}) e^{-1/2}
(1 + 2 e_t + O(\sqrt{\frac{\psi} {s_{t}}} + \frac{1}{d_{t}})) + O(\sqrt{\frac{\psi d_{t}(u) d_{t}}{s_{t}}}) \\
&= d_{t}(u)(1 - \frac {1} {16} \frac {s_{t}}{d_{t}} e^{-1/2}) e^{-1/2}
(1 + 2 e_t + O(\sqrt{\frac{\psi} {s_{t}}} + \frac{1}{d_{t}})) + d_{t} O(\sqrt{\frac{\psi d_{t}(u)}{s_{t} d_{t}}}) \\
&\leq d_{t}(1 - \frac {1} {16} \frac {s_{t}}{d_{t}} e^{-1/2}) e^{-1/2}
(1 + 2 e_t + O(\sqrt{\frac{\psi} {s_{t}}} + \frac{1}{d_{t}})) + d_{t} O(\sqrt{\frac{\psi}{s_{t}}}) \\
&= d_{t}(1 - \frac {1} {16} \frac {s_{t}}{d_{t}} e^{-1/2}) e^{-1/2}
(1 + 2 e_t + O(\sqrt{\frac{\psi} {s_{t}}} + \frac{1}{d_{t}})).
\end{align*}
Thus the event
\begin{align*}
\lefteqn{\{\frac{\bar d_{t}(u)}{\tilde s_{t}(u)} 
\geq d_{t}(1 - \frac {1} {16} \frac {s_{t}}{d_{t}} e^{-1/2}) e^{-1/2}
(1 + 2 e_t + O(\sqrt{\frac{\psi} {s_{t}}} + \frac{1}{d_{t}}))\} }
 	\;\;\;\;\;\; \\
&\subseteq \{\frac{\bar d_{t}(u)}{\tilde s_{t}(u)} 
\geq d_{t}(u)(1 - \frac {1} {16} \frac {s_{t}}{d_{t}} e^{-1/2}) e^{-1/2}
(1 + 2 e_t + O(\sqrt{\frac{\psi} {s_{t}}} + \frac{1}{d_{t}} + \sqrt{\frac{\psi d_{t}}{s_{t} d_{t}(u)}}))\}.
\end{align*}
Therefore
\begin{align*}
\lefteqn{e^{-\psi} O(1) }
 	\;\;\;\;\;\; \\
&\leq Pr\{\frac{\bar d_{t}(u)}{\tilde s_{t}(u)} 
\geq d_{t}(1 - \frac {1} {16} \frac {s_{t}}{d_{t}} e^{-1/2}) e^{-1/2}
(1 + 2 e_t + O(\sqrt{\frac{\psi} {s_{t}}} + \frac{1}{d_{t}}))\} \\
&\leq Pr\{\frac{\bar d_{t}(u)}{\tilde s_{t}(u)} 
\geq d_{t}(u)(1 - \frac {1} {16} \frac {s_{t}}{d_{t}} e^{-1/2}) e^{-1/2}
(1 + 2 e_t + O(\sqrt{\frac{\psi} {s_{t}}} + \frac{1}{d_{t}} + \sqrt{\frac{\psi d_{t}}{s_{t} d_{t}(u)}}))\}.
\end{align*}
\qed
\end{proof}

Next we show that in the cleanup phase of round $t$, 
a vertex discards so many colors that its palette size in round $t+1$ 
becomes less than
$\frac {1}2 s_{t+1}(1-e_{t+1})$ with a very small probability.
\begin{lemma}
\label{lem:S}
Given Assumption \ref{ass:indhyp}, for each vertex $u$ in $V(G_{t+1})$ we have
\begin{align*}
Pr\{& \exists \alpha \in [0, \frac 1 2] \mbox{ such that } \forall c \in S_{t+1}(u)\\
	& s_{t+1}(u) \geq (1-\alpha) s_{t+1}(1 - \frac 3 2 e_t + O(\sqrt{\frac{\psi}{s_t}} + \frac{1} {d_{t}})), \\
	& d_{t+1}(u) \leq \frac{1-2 \alpha}{1-\alpha} d_{t+1}(1+ \frac 3 2 e_t + O(\sqrt{\frac{\psi}{s_t}} + \frac{1} {d_{t}})), \\
	& d_{t+1}(u,c) \leq 2 d_{t+1}(1+ \frac 3 2 e_t + O(\sqrt{\frac{\psi}{s_t}} + \frac{1} {d_{t}})) \} \\
	& \;\;\;\;\;\;\;\;\;\;\;\; \geq 1 - e^{-\psi} O(1).
\end{align*}
\end{lemma}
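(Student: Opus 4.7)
The proof follows the same pattern as Lemma \ref{lem:A}: feed the Azuma-based concentration results (Lemmas \ref{lem:sg} and \ref{lem:tilde_a}) into the inductive hypothesis (Assumption \ref{ass:indhyp}), and then analyze how Phase III reshapes the palette. The new element here, relative to Lemma \ref{lem:A}, is that we must track how the parameter $\alpha$ evolves across one round and verify that the cleanup leaves the palette in exactly the structural form demanded by Assumption \ref{ass:indhyp} at round $t+1$.

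\textbf{Step 1 (pre-cleanup state).} Apply Lemma \ref{lem:sg} to $\tilde s_t(u)$ and Lemma \ref{lem:tilde_a} to $\bar d_t(u)/\tilde s_t(u)$, then combine with Assumption \ref{ass:indhyp} (which gives $s_t(u) \geq (1-\alpha_t) s_t(1-e_t)$, $d_t(u) \leq \tfrac{1-2\alpha_t}{1-\alpha_t} d_t(1+e_t)$, and $d_t(u,c) \leq 2d_t(1+e_t)$) and the recurrences for $s_{t+1}$ and $d_{t+1}$. This yields, with probability at least $1 - e^{-\psi} O(1)$,
$$\tilde s_t(u) \;\geq\; (1-\alpha_t)\, s_{t+1}\bigl(1 - \tfrac{3}{2} e_t - O(\sqrt{\psi/s_t} + 1/d_t)\bigr),$$
$$\frac{\bar d_t(u)}{\tilde s_t(u)} \;\leq\; \frac{1-2\alpha_t}{1-\alpha_t}\, d_{t+1}\bigl(1 + 3 e_t + O(\sqrt{\psi/s_t} + 1/d_t)\bigr),$$
where the extra $\sqrt{\psi d_t/(s_t d_t(u))}$ term from Lemma \ref{lem:tilde_a} is absorbed exactly as in the proof of Lemma \ref{lem:A}.

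\textbf{Step 2 (Phase III analysis).} Let $\alpha_0$ denote the algorithm's clipped Phase III parameter. The first inequality of Step 1 gives $\alpha_0 \leq \alpha_t + O(e_t + \sqrt{\psi/s_t} + 1/d_t)$. The smallest $\gamma \geq 1$ chosen by the algorithm satisfies $\gamma \leq \tfrac{(1-2\alpha_t)(1-\alpha_0)}{(1-\alpha_t)(1-2\alpha_0)}(1 + 3 e_t + O(\cdots))$ by the second inequality; since $\alpha_0$ is at most $\alpha_t$ plus a small error, the leading ratio is $1 + O(\cdots)$ and one obtains $\gamma \leq 1 + \tfrac{3}{2} e_t + O(\sqrt{\psi/s_t} + 1/d_t)$. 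Inequality 3 ($d_{t+1}(u,c) < 2\gamma d_{t+1}$) is then immediate from the cleanup rule.

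\textbf{Step 3 (palette and average).} By Markov's inequality (Lemma \ref{lem:mu1}), Phase III removes at most a $\tfrac{1-2\alpha_0}{2(1-\alpha_0)}$ fraction of colors, so $s_{t+1}(u) \geq \tilde s_t(u)/[2(1-\alpha_0)] \geq s_{t+1}/2$. Define the concluding $\alpha \in [0,1/2]$ by making Inequality 1 tight: $s_{t+1}(u) = (1-\alpha) s_{t+1}(1 - \tfrac{3}{2} e_t + O(\cdots))$. The bound $s_{t+1}(u) \geq s_{t+1}/2$ forces $\alpha \leq 1/2$, while $\alpha \leq \alpha_0 + O(\mathrm{err})$ because cleanup shrinks $\tilde s_t(u)$ by a factor of at most $2(1-\alpha_0)$. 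Inequality 2 then follows from the algorithm's built-in guarantee $d_{t+1}(u) \leq \tfrac{1-2\alpha_0}{1-\alpha_0}\gamma d_{t+1}$ by absorbing the small difference between $\tfrac{1-2\alpha_0}{1-\alpha_0}$ and $\tfrac{1-2\alpha}{1-\alpha}$ into the error. A union bound over the two Azuma events finishes the proof.

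\textbf{Main obstacle.} The delicate part is synchronizing three different values of $\alpha$ (the inductive $\alpha_t$, the algorithm's $\alpha_0$, and the concluding $\alpha$) while propagating error terms through the non-linear coefficient $\tfrac{1-2\alpha}{1-\alpha}$, which becomes singular as $\alpha \to 1/2$. This is exactly why Phase III clips $\alpha_0$ at $1/2$, and why the $\tfrac{3}{2} e_t$ slack in the statement is the right amount for the recurrence $e_{t+1} = 3 e_t + \beta\sqrt{\psi/s_t}$ (together with the contribution from Lemma \ref{lem:A}) to close the induction in the main proof.
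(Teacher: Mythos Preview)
Your overall architecture (feed Lemmas \ref{lem:sg} and \ref{lem:tilde_a} into the inductive hypothesis, then analyze Phase III) is right, but Step 2 contains a genuine gap that your ``main obstacle'' paragraph flags without actually resolving. You bound the algorithm's $\gamma$ by
\[
\gamma \;\leq\; \frac{(1-2\alpha_t)(1-\alpha_0)}{(1-\alpha_t)(1-2\alpha_0)}\bigl(1+3e_t+O(\cdots)\bigr)
\]
and then assert this is $1+\tfrac{3}{2}e_t+O(\cdots)$ because $\alpha_0\le\alpha_t+O(\text{err})$. But when $\alpha_t$ is close to $1/2$, a small additive error $\epsilon'$ between $\alpha_0$ and $\alpha_t$ becomes a multiplicative factor of order $1+\epsilon'/(1-2\alpha_0)$ in that ratio, which is \emph{not} $1+O(\text{err})$; if $\alpha_0$ is clipped to $1/2$ the bound on $\gamma$ is vacuous. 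Consequently your Inequality~3 ($d_{t+1}(u,c)\le 2\gamma d_{t+1}$) is not established, and the same issue propagates into your Step~3 derivation of Inequality~2, where you try to absorb the difference between $\tfrac{1-2\alpha_0}{1-\alpha_0}$ and $\tfrac{1-2\alpha}{1-\alpha}$ into the error term.

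The paper sidesteps this entirely by a different device: rather than tracking $\alpha_0$, it models Phase~III as a two–step \emph{virtual} process using the inductive $\alpha_t$ throughout. First, \emph{add} $\tfrac{\alpha_t}{1-\alpha_t}\tilde s_t(u)$ fictitious colors, each with value $2\gamma d_{t+1}$, where $\gamma$ is chosen so that $\tilde d_t(u)\le \tfrac{1-2\alpha_t}{1-\alpha_t}\gamma d_{t+1}$; since this matches the bound on $\tilde d_t(u)$ coming from Lemma~\ref{lem:tilde_a}, one gets $\gamma\le 1+3e_t+O(\cdots)$ \emph{directly}, with no division by $1-2\alpha$. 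By Lemma~\ref{lem:mu2} the enlarged palette has size $\ge s_{t+1}(1+O(\cdots))$ and average $\le \gamma d_{t+1}$. Second, \emph{remove} every color with value $\ge 2\gamma d_{t+1}$; applying Lemma~\ref{lem:mu1} with $q=2$ to the enlarged set produces, for some fresh $\alpha'\in[0,1/2]$, size $\ge(1-\alpha')s_{t+1}(1+O(\cdots))$ and average $\le\tfrac{1-2\alpha'}{1-\alpha'}\gamma d_{t+1}$. All three inequalities then hold for this single $\alpha'$, and the factors $(1-\alpha')$ and $\tfrac{1-2\alpha'}{1-\alpha'}$ emerge from Lemma~\ref{lem:mu1} rather than from error-propagation through a singular coefficient. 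This add-then-remove trick is the missing ingredient in your argument; it is precisely what makes the induction close uniformly over $\alpha_t\in[0,1/2]$.
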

\begin{proof}

Consider vertex $u \in V(G_{t+1})$. 
Using Assumption \ref{ass:indhyp}, at round $t$, $\exists \alpha \in [0, \frac 1 2]$ such that $ s_{t}(u) \geq (1-\alpha) s_{t}(1 - e_{t}) $
and $d_{t}(u) \leq \frac {1-2\alpha}{1-\alpha} d_{t}(1 + e_{t})$. 
By Lemma \ref{lem:sg} we get
\begin{align*}
Pr\{& s_{t}(u) e^{-1/2}(1 - \frac 1 2 e_t + O(\sqrt{\frac{\psi}{s_{t}}})) \leq \tilde s_{t}(u) \leq s_{t}(u) e^{-1/2}(1 + \frac 1 2 e_t + O(\sqrt{\frac{\psi}{s_{t}}}))\}  \\
	& \;\;\;\;\;\;\;\;\;\;\;\; \geq 1 - e^{-\psi} O(1).
\end{align*}
Now
\begin{eqnarray*}
\tilde s_{t}(u) &=& s_{t}(u) e^{-1/2}(1+ \frac 1 2 e_t + O(\sqrt{\frac{\psi}{s_t}} + \frac{1} {d_{t}})) \\
	&\geq& (1-\alpha)s_t e^{-1/2}(1+ \frac 1 2 e_t + O(\sqrt{\frac{\psi}{s_t}} + \frac{1} {d_{t}})) \\
	&\geq& (1-\alpha) s_{t+1}(1+ \frac 1 2 e_t + O(\sqrt{\frac{\psi}{s_t}} + \frac{1} {d_{t}})).
\end{eqnarray*}
By Lemma \ref{lem:tilde_a} we get
\begin{align*}
Pr\{& \tilde d_{t}(u) 
\leq d_{t}(u) (1 - \frac {1}{16} \frac {s_{t}}{d_{t}} e^{-1/2})e^{-1/2}(1 + \frac 3 2 e_t + O(\sqrt{\frac{\psi} {s_{t}}} + \frac{1}{d_{t}} + \sqrt{\frac{\psi d_{t}}{s_{t} d_{t}(u)}}))\}  \\
	& \;\;\;\;\;\;\;\;\;\;\;\; \geq 1 - e^{-\psi} O(1).
\end{align*}
Now
\begin{align*}
\tilde d_{t}(u) &\leq d_{t}(u)(1-\frac {1} {16} \frac {s_{t}}{d_{t}} e^{-1/2})e^{-1/2} 
(1+  \frac 3 2 e_t + O(\sqrt{\frac {\psi}{s_{t}}} + \frac{1} {d_{t}})) \\
	&\leq \frac {1-2\alpha}{1-\alpha} d_{t}(1-\frac {1} {16} \frac {s_{t}}{d_{t}} e^{-1/2})e^{-1/2} 
(1+  \frac 3 2 e_t + O(\sqrt{\frac {\psi}{s_{t}}} + \frac{1} {d_{t}})) \\
	&\leq \frac {1-2\alpha}{1-\alpha} \gamma d_{t+1}.
\end{align*}
where $\gamma$ is the smallest number in $[1,\infty)$ for which the above inequality is true.
Combining the preceding inequalities, we get
$$\gamma = 1+  3 e_t + O(\sqrt{\frac{\psi}{s_{t}}} + \frac{1} {d_{t}}).$$

In the cleanup phase of our algorithm(given in Section \ref{sec:algorithm_details}), 
the change in palette is equivalent to the following process.
\begin{enumerate}
\item
Add $\frac{\alpha}{1-\alpha} \tilde s_t(u)$ arbitrary colors to 
$u$'s palette, with $\tilde d_{t}(u,c) = 2 \gamma d_{t+1}$.
This adjusts the palette size to $ \tilde s_{t}(u) \geq s_{t+1}(1+ 3 e_t + O(\sqrt{\psi/s_{t}} + 1/ d_t ))$.
Lemma \ref{lem:mu2} ensures that the adjusted new average is 
$ \tilde d_{t}(u) \leq \gamma d_{t+1}$
\item
Remove all the colors with $d_{t}(u,c) \geq 2 \gamma d_{t+1}$. 
\end{enumerate}
Now we use Lemma \ref{lem:mu1}, setting $\mu$ to $\gamma d_{t+1}$ and $q\mu$ to $2 \gamma d_{t+1}$, to get
$$s_{t+1}(u) \geq (1-\alpha) s_{t+1}(1 + 3 e_t + O(\sqrt{\frac{\psi}{s_t}} + \frac{1} {d_{t}}))$$
and 
$$d_{t+1}(u) \leq \frac{1-2 \alpha}{1-\alpha} d_{t+1}(1+ 3 e_t + O(\sqrt{\frac{\psi}{s_t}} + \frac{1} {d_{t}})).$$
The result is obtained using Lemmas \ref{lem:sg} and \ref{lem:tilde_a}.
\qed
\end{proof}

\section{Acknowledgement}
I am indebted to Fan Chung  her comments and support.

\bibliographystyle{amsplain}
\bibliography{color}

\end{document}